\documentclass{amsart}
\usepackage{amsmath,amssymb,amsthm}
\usepackage{cite}

\numberwithin{equation}{section}

\newtheorem{Theorem}{Theorem}[section]

\newtheorem{Corollary}[Theorem]{Corollary}
\newtheorem{Lemma}[Theorem]{Lemma}
\newtheorem{Proposition}[Theorem]{Proposition}


\makeatletter
\def\section{\@startsection {section}{1}{\z@}%
{2.5ex plus - 1ex minus -.2ex}%
{.5ex \@plus.3ex}%
{\large\bfseries}%
}
\def\subsection{\@startsection {subsection}{1}{\z@}%
{2.5ex plus - 1ex minus -.2ex}%
{.5ex \@plus.3ex}%
{\normalsize\bfseries}%
}
\makeatother

\begin{document}

\title[Blow up for a system of evolution equations]{
Blow-up of solutions for weakly coupled systems
of complex Ginzburg-Landau equations}

\author[K. Fujiwara]{Kazumasa Fujiwara}

\address[K. Fujiwara]{%
Department of Pure and Applied Physics,
Waseda University,
3-4-1, Okubo, Shinjuku-ku,
Tokyo, 169-8555, Japan}

\email{k-fujiwara@asagi.waseda.jp}

\thanks{
The first author was partly supported by the Japan Society for the Promotion of Science,
Grant-in-Aid for JSPS Fellows no 16J30008.
}

\author[M. Ikeda]{Masahiro Ikeda}
\address[M. Ikeda]{Department of Mathematics, Faculty of Science and Technology,
Keio University,
3-14-1 Hiyoshi, Kohoku-ku, Yokohama, 223-8522, Japan/Center for Advanced Intelligence Project, RIKEN, Japan}
\email{masahiro.ikeda@keio.jp/masahiro.ikeda@riken.jp}

\author[Y. Wakasugi]{Yuta Wakasugi}

\address[Y. Wakasugi]{Department of Engineering for Production and Environment,
Graduate School of Science and Engineering,
Ehime University,
3 Bunkyo-cho, Matsuyama, Ehime, 790-8577,
Japan}%
\email{wakasugi.yuta.vi@ehime-u.ac.jp}

\begin{abstract}
Blow-up phenomena of
weakly coupled systems of several evolution equations,
especially complex Ginzburg-Landau equations
is shown by a straightforward ODE approach
not so-called test-function method used in \cite{bibx:35},
which gives the natural blow-up rate.
The difficulty of the proof is that,
unlike the single case,
terms which come from the fact that the Laplacian cannot be absorbed
into the weakly coupled nonlinearities.
A similar ODE approach is applied to heat systems by Mochizuki \cite{bibx:30}
to obtain the lower estimate of lifespan.
\end{abstract}

\maketitle

\section{Introduction}
In the present paper,
we study a blow-up phenomena
(blow-up rate and estimates of lifespan)
for the following Cauchy problem for the Ginzburg-Landau systems
with the weakly coupled nonlinearity
by developing an ODE approach used in \cite{bibx:30}:
	\begin{align}
	\begin{cases}
	\partial_t u + \alpha_1 \Delta u = \beta_1 |v|^p,
	& t \in \lbrack 0, T), \quad x \in \mathbb X,\\
	\partial_t v + \alpha_2 \Delta v = \beta_2 |u|^q,
	& t \in \lbrack 0, T), \quad x \in \mathbb X,\\
	u(0,x) = u_0(x),\quad
	v(0,x) = v_0(x),
	& x \in \mathbb X,
	\end{cases}
	\label{eq:1.1}
	\end{align}
where
$u=u(t,x)$ and $v=v(t,x)$ are unknown complex-valued functions of $(t,x)$,
$\alpha_1,\alpha_2,\beta_1,\beta_2 \in \mathbb C \backslash \{0\}$ and
$p, q \geq 1$ are constants,
$\mathbb X$ denotes the $n$-dimensional Euclidean space $\mathbb{R}^n$
or Torus $\mathbb{T}^n$
with $n \in \mathbb N$,
and $\Delta:=\sum_{i=1}^n\frac{\partial^2}{\partial x_i^2}$
denotes the Laplacian on $\mathbb X$.
$T$ denotes the maximal existence time of the function $(u,v)$
and is called lifespan.

Our aim in the present paper is to understand
a blow-up mechanism of the weakly-coupled system \eqref{eq:1.1}
by an ODE argument which is more direct than the test-function method
used in \cite{bibx:35}.

When $\alpha_1=\alpha_2=-1$, $\beta_1=\beta_2=1$,
$u$ and $v$ are real-valued and $u_0$ and $v_0$ are non-trivial non-negative,
the problem \eqref{eq:1.1} becomes the Cauchy problem
for the following heat systems with the weakly coupled nonlinearities:
	\begin{align}
	\begin{cases}
	\partial_t u - \Delta u = v^p,
	& t \in \lbrack 0, T), \quad x \in \mathbb X,\\
	\partial_t v - \Delta v = u^q,
	& t \in \lbrack 0, T), \quad x \in \mathbb X,\\
	u(0,x) = u_0(x) \geq 0,\quad
	v(0,x) = v_0(x) \geq 0,
	& x \in \mathbb X.
	\end{cases}
	\label{eq:1.2}
	\end{align}
We extend the blow-up result for \eqref{eq:1.2} into the complex setting.

We, at first, recall several previous results about blow-up of single heat equations.
The following Cauchy problem for the single heat equation with a power-type nonlinearity
has been extensively studied:
	\begin{align}
	\begin{cases}
	\partial_t u - \Delta u = u^p,
	&t \in \lbrack 0,T),\quad x \in \mathbb R^n,\\
	u(0,x) = u_0(x),
	&x \in \mathbb R^n,
	\end{cases}
	\label{eq:1.3}
	\end{align}
where $u$ is unknown non-negative function,
$u_0$ is non-negative functions which is not identically $0$,
and $p \geq 1$.
As a pioneering work,
Fujita \cite{bibx:9} showed that
if $p < p_F := 1 + 2/n$,
where $p_F$ is called the Fujita exponent,
then any non-trivial non-negative solutions blow up in a finite time
by using the contradiction argument.
See also \cite{bibx:10}.
He used the following ODE
coming from \eqref{eq:1.3} without the Laplacian to show the blow-up result:
	\begin{align}
	\frac{d}{dt} f(t) = f(t)^p,
	\label{eq:1.4}
	\end{align}
where $f$ is a positive $C^1$-function.
If $f(0) > 0$, then the solution of \eqref{eq:1.4} is given by
	\[
	f(t) = (f(0)^{-p+1}- (p-1) t)^{-\frac{1}{p-1}}
	\]
and therefore $f$ blows up at $t = \frac{f(0)^{-p+1}}{p-1}$
and the blow-up rate is $-\frac{1}{p-1}$.
Later, Hayakawa \cite{bibx:14}
and Kobayashi, Sirao, and Tanaka \cite{bibx:22} independently showed that
in the critical case where $p = p_F$, any non-trivial non-negative solutions
blow up in a finite time.
Moreover, Giga and Kohn \cite{bibx:13} proved that
if $p > 1$ when $n=1,2$ and $1 < p < \frac{n + 2}{n-2}$ when $n \geq 3$, then
the positive solution $u$ of \eqref{eq:1.3} blows up in type I rate, i.e.
	\begin{align}
	\| u(t) \|_{L^{\infty}(\mathbb R^n)} \leq C(T-t)^{-\frac{1}{p-1}},
	\quad t \in \lbrack 0, T)
	\label{eq:1.5}
	\end{align}
for some constant $C$ independent of $t$ and $T$.
The estimate \eqref{eq:1.5} implies that
behavior of blow-up solutions of \eqref{eq:1.3}
and that of \eqref{eq:1.4} have similar blow-rate and lifespan.
Later, Matano and Merle \cite{bibx:26} extended the result of \cite{bibx:13}
to $\frac{n+2}{n-2} < p < p_{JL}$,
where $p_{JL}$ is the Joseph-Lundgren exponent given by
$p_{JL} := \infty \ (3\le n \le 10)$,
$p_{JL} := 1+\frac{4}{n-4-\sqrt{n-1}} \ (n \ge 11)$.
We remark that Herrero and Vel\'azquez \cite{bibx:16} showed that
for $n \geq 11$ and $p_{JL} < p$,
there exists a blow-up solution of \eqref{eq:1.3}
which violates the estimate \eqref{eq:1.5}.
Thus the analogy from ODE \eqref{eq:1.4}
does not work to the heat equation \eqref{eq:1.3}
in the case where the power and spacial dimension are sufficiently high.

In the subcritical case where $p < p_F$,
Lee and Ni \cite{bibx:25} obtained
the sharp estimate of lifespan to \eqref{eq:1.3}
	\[
	c \epsilon^{-\frac{1}{\frac{1}{p-1}-\frac{n}{2}}}
	\leq T \leq
	C \epsilon^{-\frac{1}{\frac{1}{p-1}-\frac{n}{2}}},
	\]
where $\epsilon > 0$ is sufficiently small
and $u_0$ is replaced by $u_0 = \epsilon u_0$.
We note that
they used the comparison principle for heat equations
and introduced blow-up sub-solutions and super-solutions.
We remark that the exponent
$(\frac{1}{p-1}-\frac{2}{n})^{-1}$
is sharp with respect to the size of the initial data.
For more information about \eqref{eq:1.3},
see \cite{bibx:8,bibx:15,bibx:27,bibx:29}
and the references therein.

On the other hand,
Zhang \cite{bibx:35} studied
non-existence of global weak solutions
of a semilinear parabolic equations for some initial data
and a nonlinearity of a variable coefficient
by using the so-called test function method.
His method is based on a contradiction argument
with a weak form.
If we apply his method to \eqref{eq:1.3},
the weak form is
	\begin{align*}
	&\int_{0}^{T} \int_{\mathbb R^n}
	u(t,x) (- \partial_t \phi(t,x) - \Delta \phi(t,x))
	\thinspace dx \thinspace dt\\
	&= \int_{\mathbb R^n} u_0(x) \phi(x)
	+\int_{0}^{T} \int_{\mathbb R^n}
	|u(t,x)|^p \phi(t,x)
	\thinspace dx \thinspace dt,
	\end{align*}
where $\phi$ is a smooth, non-negative, compact supported function
(see also \cite{bibx:28}).
By modifying his argument,
Kuiper \cite{bibx:24} gave
an estimate of lifespan for some parabolic equations
for some slowly decreasing initial datum.
His argument has been applied to semilinear Schr\"odinger
and damped wave equations.
However,
the test function method does not give blow-up rate
since, roughly speaking, unknown functions are canceled out in this argument.

Next we recall several previous results of blow-up
for single complex Ginzburg-Landau equations,
	\begin{align}
	\begin{cases}
	\partial_t u - \alpha \Delta u = F(u),
	& t \in \lbrack 0, T), \quad x \in \mathbb X,\\
	u(0,x) = u_0(x),
	& x \in \mathbb X,
	\end{cases}
	\label{eq:1.6}
	\end{align}
where $u$ is complex-valued unknown function,
$\alpha \in \mathbb C \backslash \{0\}$,
and $F$ is the nonlinearity.
Ogawa and Tsutsumi \cite{bibx:31} studied
blow-up for the case where $i \alpha < 0$, $F(u)= i |u|^4u$, $\mathbb X = \mathbb T$.
Later, Ozawa and Yamazaki \cite{bibx:33} studied \eqref{eq:1.6}
in the case where $\mathrm{Re} \thinspace \alpha \geq 0$,
$F(u) = (\kappa + i \beta) |u|^p + \gamma u$ with $\beta >0$,
$\kappa, \gamma \in \mathbb R$,
and
	\[
	\mathrm{Im} \int_{\mathbb T} u_0(x) dx > 0.
	\]
They introduced
	\[
	M(t) =
	\frac{e^{-\gamma t}}{2\pi} \mathrm{Im}
	\int_{\mathbb T} u(t,x) dx,
	\quad 0 \leq t < T
	\]
and showed that $M$ is positive
and satisfies the ordinary differential inequality (ODI)
	\begin{align}
	\frac{d}{dt} M(t) \geq \frac{e^{(p-1)\gamma t}}{2 \pi} M(t)^p,
	\quad 0 \leq t < T,
	\label{eq:1.7}
	\end{align}
which implies that $M$ blows up at a finite positive time
by a comparison principle.
We remark that in their argument,
the embedding $L^p(\mathbb T^n) \hookrightarrow L^1(\mathbb T^n)$
and identity
	\[
	\int_{\mathbb T^n} \Delta u(t,x) dx = 0
	\]
for $n \geq 1$ play a crucial role to obtain the ODI \eqref{eq:1.7}.
Oh \cite{bibx:32} studied blow-up of \eqref{eq:1.6}
in the case where $\alpha \in \mathbb C \backslash \{0\}$,
$F(u) = \lambda |u|^p$,
and $\mathbb X = \mathbb T^n$ with $n \geq 1$
by the test function method.
He showed that if
	\begin{align}
	\mathrm{Re} \thinspace \lambda
	\ \mathrm{Im} \int_{\mathbb T^n} u_0(x) dx < 0,
	\quad \mbox{or} \quad
	\mathrm{Im} \lambda \ \mathrm{Re} \int_{\mathbb T^n} u_0(x) dx > 0,
	\label{eq:1.8}
	\end{align}
then there is no global weak solutions.
The second and third authors \cite{bibx:20} applied the test function method
to study blow-up of \eqref{eq:1.6} in the case where $\alpha = i$, $F(u) = \lambda |u|^p$,
and $\mathbb X = \mathbb R^n$ with $n \geq 1$.
They showed if $1 < p \leq 1 + \frac{2}{n}$
and $u_0 \in L^1(\mathbb R^n) \cap L^2(\mathbb R^n)$ satisfies
	\[
	\mathrm{Re} \lambda \ \mathrm{Im} \int_{\mathbb R^n} u_0(x) dx < 0,
	\quad \mbox{or} \quad
	\mathrm{Im} \lambda \ \mathrm{Re} \int_{\mathbb R^n} u_0(x) dx > 0,
	\]
then the solution blows up in a finite time.
Moreover, by modifying the test function method of Kuiper,
the second author and Inui \cite{bibx:18,bibx:19} obtained
an upper bound of lifespan of solutions for the same problem
with $1 < p < 1 + \frac{4}{n}$
for some initial datum which decay slowly or have singularity at the origin,
where $1+\frac{4}{n}$ is the scaling critical exponent of $L^2(\mathbb R^n)$.
Recently, the first author and Ozawa \cite{bibx:11,bibx:12}
extended the results of Oh and the second author and Inui \cite{bibx:18, bibx:19}.
They showed blow-up phenomena
(blow-up rate and upper bounds of lifespan of solutions)
by an ODE argument connected with the test function method.
Indeed,
in the torus case where $\mathbb X = \mathbb T^n$ and $p >1$,
for $L^1(\mathbb T^n)$-initial data,
let
	\[
	\widetilde M(t) = \mathrm{Re} \bigg( \overline \lambda \int_{\mathbb T^n} u(t,x) dx
	\bigg),
	\quad t \in \lbrack 0, T).
	\]
Then, as with the approach of Ozawa and Yamazaki \cite{bibx:33}, they showed that,
if
	\begin{align}
	\widetilde M(0)
	= \mathrm{Re} \bigg( \overline \lambda \int_{\mathbb T^n} u_0(x) dx
	\bigg)
	> 0,
	\label{eq:1.9}
	\end{align}
then $\widetilde M$ satisfies
	\begin{align*}
	\frac{d}{dt} \widetilde M(t)
	&= |\lambda|^{2} \int_{\mathbb T^n} |u(t,x)|^p dx\\
	&\geq |\mathbb T^n|^{-(p-1)} |\lambda|^{2-p}
	\widetilde M(t)^p,
	\quad t \in \lbrack 0, T)
	\end{align*}
and this ODI implies that $\widetilde M$ blows up at a finite time.
We remark that the condition \eqref{eq:1.9} includes the condition \eqref{eq:1.8}.
Moreover,
in the Euclidean case where $\mathbb X = \mathbb R^n$ and $1 < p < p_F$,
for $L^1(\mathbb T^n)$-initial data,
if $u_0 \in L^1(\mathbb R^n) \cap L^2(\mathbb R^n)$,
then solutions for \eqref{eq:1.6} satisfy the ODI,
	\begin{align}
	\frac{d}{dt} \bigg(
	\int_{\mathbb R^n} u(t,x) \phi(x) dx
	- \frac{1}{2} U_0 \bigg)
	\geq C
	U_0^{\frac{n(p-1)}{\frac{2}{p-1}-n}}
	\bigg(
	\int_{\mathbb R^n} u(t,x) \phi(x) dx
	- \frac{1}{2} U_0 \bigg)^p
	\label{eq:1.10}
	\end{align}
for $0 < t < T$, where $C$ is some positive constant and
	\[
	U_0 = \int_{\mathbb R^n} u_0(x) \phi(x) dx
	\]
with a smooth test function $\phi$.
Therefore,
$\int_{\mathbb R^n} u(t,x) \phi(x) dx - \frac{1}{2} U_0$
is a super-solution of ODE \eqref{eq:1.4}
with a constant.
Thus the comparison principle implies that
the inequality,
	\begin{align*}
	\int_{\mathbb R^n} u(t,x) \phi(x) dx
	&\geq C_1 \bigg( U_0^{-p+1}
	- C_2 U_0^{\frac{n(p-1)}{\frac{2}{p-1}-n}} t
	\bigg)^{-\frac{1}{p-1}}\\
	&= C_1
	U_0^{-\frac{n}{\frac{2}{p-1}-n}}
	\bigg( U_0^{-\frac{1}{\frac{1}{p-1}-\frac{n}{2}}}
	- C_2 t
	\bigg)^{-\frac{1}{p-1}}
	\end{align*}
holds with some positive constants $C_1$ and $C_2$
for any $0 < t <T$.
In their argument,
the condition $p < p_F$ plays a crucial role
to obtain ODI \eqref{eq:1.10} by a scaling transformation of $\phi$.

Let us recall several previous results
for blow-up of weakly coupled heat systems \eqref{eq:1.2}.
Escobedo and Herrero \cite{bibx:3} showed that
if $p, q > 0$ and $pq > 1$,
then all positive solutions blow up
if
	\begin{align}
	\max \bigg( \frac{p+1}{pq-1},\ \frac{q+1}{pq-1} \bigg) \geq \frac{n}{2}
	\label{eq:1.11}
	\end{align}
and global solutions with small data
exist if
	\[
	\max \bigg( \frac{p+1}{pq-1},\ \frac{q+1}{pq-1} \bigg) < \frac{n}{2}.
	\]
We remark that if $p=q$, then \eqref{eq:1.11} coincides with $p \leq p_F$.
In this sense, their results correspond to the case
of single heat equation \eqref{eq:1.3}.
We remark that it is shown in \cite{bibx:2} that
even in the case where $\mathrm{min}(p,q) < 1$,
\eqref{eq:1.2} has local solutions.
Moreover,
Andreucci, Herrero, and Vel\'azquez \cite{bib:1} showed that
when $\mathbb X  = \mathbb{R}^n$,
the solution $(u,v)$ to \eqref{eq:1.2} satisfies
	\begin{align}
	u(t,x) \leq C (T-t)^{-\frac{p+1}{pq-1}},\quad
	v(t,x) \leq C (T-t)^{-\frac{q+1}{pq-1}}
	\label{eq:1.12}
	\end{align}
for $ 0 < t < T$
with a positive constant $C$
(see \cite{bibx:4} for bounded domain cases).
After that, for general unbounded domain
$\mathbb X = \Omega$,
Fila and Souplet \cite{bibx:7}
obtained the estimates \eqref{eq:1.12},
provided that
solution $(u,v)$ satisfies the stronger condition below than \eqref{eq:1.2},
	\begin{align}
	\max \bigg( \frac{p+1}{pq-1},\ \frac{q+1}{pq-1} \bigg) \geq \frac{n+1}{2}.
	\label{eq:1.13}
	\end{align}
We expect that when $\mathbb X = \Omega$ is an exterior domain,
the condition \eqref{eq:1.13} above can be relaxed to the condition \eqref{eq:1.11}.
For corresponding results of general component cases,
we refer the readers to \cite{bibx:6} and the references therein.
Next,
Mochizuki \cite{bibx:30} obtained the sharp estimate of lifespan
of the same problem \eqref{eq:1.2} with $p,q >1$
for non-negative initial data.
In order to obtain the upper bound of lifespan,
he used the following ODE systems:
	\begin{align}
	\begin{cases}
	\frac{d}{dt} f(t) + C_1 f(t) = C_2 g(t)^p,
	& t \in \lbrack 0, T),\\
	\frac{d}{dt} g(t) + C_1 g(t) = C_2 f(t)^q,
	& t \in \lbrack 0, T),\\
	f(0)=f_0, \quad g(0)=g_0
	\end{cases}
	\label{eq:1.14}
	\end{align}
with positive constants $C_1,C_2$ and positive numbers $f_0, g_0$.
He showed that the product $f(t)g(t)$ enjoys the following ODI:
	\begin{align}
	\frac{d}{dt} (f(t)g(t)) + C_1 f(t)g(t)
	\geq C_3 (f(t)g(t))^{\frac{(p+1)(q+1)}{p+q+2}},
	t \in \lbrack 0, T)
	\label{eq:1.15}
	\end{align}
with some positive constant $C_3$,
which implies that solutions of \eqref{eq:1.14} blow up at a finite time.
Since with small $\epsilon > 0$ and a solution $(u,v)$ for \eqref{eq:1.2},
	\[
	\bigg( \int_{\mathbb R^n} u(t,x) e^{-\epsilon |x|^2} dx,\quad
	\int_{\mathbb R^n} v(t,x) e^{-\epsilon |x|^2} dx \bigg)
	\]
is a super-solution of \eqref{eq:1.14},
the solution of \eqref{eq:1.2} blows up at a finite time
and the lifespan is estimated from above by that of solutions of \eqref{eq:1.14}.
Moreover,
he composed super-solutions of \eqref{eq:1.2}
which blow up in a finite time with the blow-up solutions of
	\begin{align}
	\begin{cases}
	\frac{d}{dt} f(t) = g(t)^p,
	& t \in \lbrack 0, T),\\
	\frac{d}{dt} g(t) = f(t)^q,
	& t \in \lbrack 0, T),\\
	f(0)=f_0,\quad g(0)=g_0.
	\end{cases}
	\label{eq:1.16}
	\end{align}
We note that $f$ and $g$ enjoy the following ODE:
	\begin{align}
	\frac{1}{q+1} \frac{d}{dt} f(t)^{q+1}
	= f(t)^p g(t)^q
	= \frac{1}{p+1} \frac{d}{dt} g(t)^{p+1},
	\quad t \in \lbrack 0, T).
	\label{eq:1.17}
	\end{align}
This identity divides ODE systems \eqref{eq:1.14} into two single ODEs.
The lifespan of solutions to the divided ODEs can be estimated.
In conclusion,
if $p > q > 1$,
for for the following initial data:
	\[
	(u_0(x),v_0(x))
	= (\epsilon \langle x \rangle^{-a},
	\epsilon^{\frac{2(q+1)-\min(b,n)(pq-1)}{2(p+1)
	- \min(a,n)(pq-1)}} \langle x \rangle^{-b}),
	\]
with small $\epsilon >0$ and some $a,b \neq n$ satisfying
$0< \min(a,n) < \frac{2(p+1)}{pq-1}$, $0< \min(b,n) < \frac{2(q+1)}{pq-1}$,
the lifespan is estimated by
	\[
	c \epsilon^{- \frac{1}{\frac{p+1}{pq-1}-\frac{n}{2}}}
	\leq T
	\leq C \epsilon^{- \frac{1}{\frac{p+1}{pq-1}-\frac{n}{2}}}
	\]
with some positive constants $c$ and $C$ independent of $\epsilon$.
See also \cite{bibx:5,bibx:23,bibx:34} and reference therein.

In the present paper,
we introduce an unified ODE approach to show the blow-up results
(blow-up rate and upper bound of lifespan)
for weakly coupled systems.
As an application,
we show the blow-up rate and upper bound of lifespan
for the complex Ginzburg-Landau systems
on both of torus and Euclidean spaces.

\begin{Theorem}
\label{Theorem:1.1}
Let $\mathbb X = \mathbb T^n$.
Let $\alpha_1, \alpha_2, \beta_1, \beta_2 \in \mathbb C \backslash \{0\}$.
Let $p,q > 0$ satisfying $pq > 1$ and $p \geq q$.
Let $(u_0, v_0)$ be not $(0,0)$ but $L^1(\mathbb T^n) \times L^1(\mathbb T^n)$ function.
Assume that there exists a solution
$(u,v) \in [ C^2((0,T)\times \mathbb T^n) ]^2$
to \eqref{eq:1.1} with the initial data $(u_0, v_0)$.
Then $(u,v)$ blows up in a finite time.
In particular, the estimates
	\begin{align}
	T \leq T_0 :=
	\bigg( \int_{\mathbb T^n} u_0(x) dx
	\bigg)^{-\frac{pq-1}{{p+1}}}
	\label{eq:1.18}
	\end{align}
and
	\begin{align}
	\int_{\mathbb T^n} u(t,x) dx
	&\geq C (T_0-t)^{- \frac{p+1}{pq-1}},
	\label{eq:1.19}\\
	\int_{\mathbb T^n} v(t,x) dx
	&\geq  C (T_0-t)^{- \frac{q+1}{pq-1}},
	\label{eq:1.20}
	\end{align}
hold for $t\in \lbrack 0,T)$ with some positive constant $C$ independent of $t$ and $T$.
\end{Theorem}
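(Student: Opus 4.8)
The plan is to integrate \eqref{eq:1.1} over $\mathbb{T}^{n}$, reduce the problem to a coupled ordinary differential inequality (ODI) for two real functionals of $(u,v)$, and then compare with the decoupled ODE system \eqref{eq:1.16} in the spirit of Mochizuki \cite{bibx:30}. Since $u(t,\cdot),v(t,\cdot)\in C^{2}(\mathbb{T}^{n})$ for $t\in(0,T)$, the Laplacian integrates to zero over $\mathbb{T}^{n}$, so integrating \eqref{eq:1.1} over $\mathbb{T}^{n}$ gives
\[
\frac{d}{dt}\int_{\mathbb{T}^{n}}u\,dx=\beta_{1}\int_{\mathbb{T}^{n}}|v|^{p}\,dx,\qquad
\frac{d}{dt}\int_{\mathbb{T}^{n}}v\,dx=\beta_{2}\int_{\mathbb{T}^{n}}|u|^{q}\,dx.
\]
Because the right-hand sides are $\beta_{1},\beta_{2}$ times nonnegative reals, the functions
\[
M(t):=\operatorname{Re}\Big(\beta_{1}^{-1}\!\int_{\mathbb{T}^{n}}u(t,x)\,dx\Big),\qquad
N(t):=\operatorname{Re}\Big(\beta_{2}^{-1}\!\int_{\mathbb{T}^{n}}v(t,x)\,dx\Big)
\]
are nondecreasing $C^{1}$ functions on $(0,T)$ obeying the exact identities $M'(t)=\int_{\mathbb{T}^{n}}|v|^{p}\,dx$ and $N'(t)=\int_{\mathbb{T}^{n}}|u|^{q}\,dx$ (and $\operatorname{Im}(\beta_{1}^{-1}\!\int u\,dx)$, $\operatorname{Im}(\beta_{2}^{-1}\!\int v\,dx)$ are constant in $t$). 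Letting $t\to0^{+}$ and using $u(t)\to u_{0}$, $v(t)\to v_{0}$ in $L^{1}(\mathbb{T}^{n})$ identifies $M(0),N(0)$ with the data, and nontriviality of $(u_{0},v_{0})$ together with monotonicity lets one reduce to the case $M(0)>0$ (and likewise $N(0)\ge 0$), the sign normalization behind \eqref{eq:1.18}.

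To close the system I would use Jensen's inequality on the finite-measure space $\mathbb{T}^{n}$: for $p\ge1$, $\int_{\mathbb{T}^{n}}|v|^{p}\,dx\ge|\mathbb{T}^{n}|^{1-p}\big(\int_{\mathbb{T}^{n}}|v|\,dx\big)^{p}$, and $\int_{\mathbb{T}^{n}}|v|\,dx\ge\big|\int_{\mathbb{T}^{n}}v\,dx\big|\ge|\beta_{2}|\,N(t)$ since $N(t)\ge N(0)\ge0$; hence $M'(t)\ge c_{1}N(t)^{p}$ with $c_{1}=|\mathbb{T}^{n}|^{1-p}|\beta_{2}|^{p}>0$, where $p>1$ is automatic from $p\ge q$ and $pq>1$. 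Exchanging the roles of $u$ and $v$ (directly by Jensen when $q\ge1$, and by an elementary variant in the range $0<q<1$) gives $N'(t)\ge c_{2}M(t)^{q}$ with $c_{2}>0$. Thus $(M,N)$ is a supersolution of the cooperative system $f'=c_{1}g^{p}$, $g'=c_{2}f^{q}$ of the type \eqref{eq:1.16}.

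The last step is an ODE comparison. Let $(f,g)$ solve $f'=c_{1}g^{p}$, $g'=c_{2}f^{q}$ with $f(0)=M(0)$ and $0\le g(0)\le N(0)$. The system being cooperative, the comparison principle gives $M(t)\ge f(t)$ and $N(t)\ge g(t)$ as long as both exist. On the other hand, the identity \eqref{eq:1.17} (here $c_{2}\frac{d}{dt}\frac{f^{q+1}}{q+1}=c_{1}\frac{d}{dt}\frac{g^{p+1}}{p+1}=c_{1}c_{2}f^{q}g^{p}$) decouples the ODE: it yields $g\ge c\,f^{(q+1)/(p+1)}$ for $f$ large and hence $f'\ge c'\,f^{p(q+1)/(p+1)}$ with exponent $p(q+1)/(p+1)>1$, so $(f,g)$ blows up at a finite time $T^{*}$; a scaling of the autonomous system identifies $T^{*}$ with a fixed multiple of $f(0)^{-(pq-1)/(p+1)}$, and integrating the decoupled inequalities gives $f(t)\ge C(T^{*}-t)^{-(p+1)/(pq-1)}$ and $g(t)\ge C(T^{*}-t)^{-(q+1)/(pq-1)}$. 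Setting $T_{0}:=T^{*}$ as in \eqref{eq:1.18}, the comparison $M\ge f$, $N\ge g$ forces $T\le T_{0}$ and, since the normalization makes $\int_{\mathbb{T}^{n}}u\,dx=M$ and $\int_{\mathbb{T}^{n}}v\,dx=N$, the bounds \eqref{eq:1.19}--\eqref{eq:1.20} follow.

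The genuinely new feature relative to a single equation is retaining \emph{both} cross-nonlinearities in the ODI; over $\mathbb{T}^{n}$ this is free because the Laplacian has zero integral (the surviving-Laplacian obstacle noted in the abstract appears only over $\mathbb{R}^{n}$, where a spatial cutoff must be inserted and its error controlled). I expect the main care to lie in: justifying the identities and the sign condition of \eqref{eq:1.18} up to $t=0$ from merely $L^{1}$ data; the borderline range $0<q<1$, in which the Jensen lower bound for $\int_{\mathbb{T}^{n}}|u|^{q}\,dx$ is unavailable; and the bookkeeping in the comparison step needed to reach exactly the blow-up time and rates in \eqref{eq:1.18}--\eqref{eq:1.20}.
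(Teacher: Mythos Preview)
Your proposal is correct and follows essentially the same route as the paper: integrate over $\mathbb{T}^n$ so the Laplacian vanishes, project onto real functionals weighted by the coupling constants (the paper uses $\overline{\beta_j}$ where you use $\beta_j^{-1}$, a cosmetic difference), apply Jensen/H\"older to obtain the coupled ODI $M'\gtrsim N^p$, $N'\gtrsim M^q$, and then invoke the Mochizuki decoupling identity \eqref{eq:1.17} (packaged in the paper as Proposition~\ref{Proposition:2.3}) to get the blow-up time and rates. Your closing caveats about the $0<q<1$ range and the sign normalization at $t=0$ are exactly the loose ends the paper's precise restatement (Proposition~\ref{Proposition:3.1}) handles by adding hypotheses rather than by argument.
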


\begin{Theorem}
\label{Theorem:1.2}
Let $\mathbb X = \mathbb R^n$.
Let $\alpha_1, \alpha_2 < 0$ and $\beta_1, \beta_2 \in \mathbb C \backslash \{0\}$.
Let $p,q > 0$ satisfying $pq > 1$ and $p \geq q$.
Let $(u_0, v_0)$ be not $(0,0)$ and be non-negative
$L_{\mathrm{loc}}^1(\mathbb R^n) \times L_{\mathrm{loc}}^1(\mathbb R^n)$ function.
Assume that there exists a solution
$(u,v) \in [C^2((0,T)\times \mathbb R^n)]^2$
to \eqref{eq:1.1} with the initial data $(u_0, v_0)$.
Then $(u,v)$ blows up in a finite time.
In particular,
there is a compactly supported non-negative function
$\phi \in C^2(\mathbb R^n)$ such that the estimates
	\begin{align}
	T \leq T_1 := C
	\bigg( \int_{\mathbb R^n} u_0(x) \phi(x) dx
	\bigg)^{-\frac{1}{\frac{p+1}{pq-1} -\frac{n}{2}}}
	\label{eq:1.21}
	\end{align}
and
	\begin{align}
	\int_{\mathbb R^n} u(t,x) \phi(x) dx
	&\geq C (T_1-t)^{- \frac{p+1}{pq-1}},
	\label{eq:1.22}\\
	\int_{\mathbb R^n} v(t,x) \phi(x)dx
	&\geq  C (T_1-t)^{- \frac{q+1}{pq-1}},
        \label{eq:1.23}
	\end{align}
hold for $t\in \lbrack 0,T)$ with some positive constant $C$ independent of $t$ and $T$.
\end{Theorem}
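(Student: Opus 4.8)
\medskip
\noindent\textbf{Proposed proof of Theorem \ref{Theorem:1.2}.}
The plan is to transplant Mochizuki's ODE mechanism \eqref{eq:1.14}--\eqref{eq:1.15} onto \eqref{eq:1.1} over $\mathbb{R}^n$, exactly as the single equation is reduced to the ODE \eqref{eq:1.4} in \cite{bibx:11,bibx:12}, but now while keeping track of the Laplacian contributions, which --- unlike on $\mathbb{T}^n$, cf.\ Theorem \ref{Theorem:1.1} --- no longer integrate to zero. Fix a radial, nonincreasing $\psi\in C_c^\infty(\mathbb R^n)$ with $\psi\equiv 1$ near the origin, take a large integer $k$, set $\phi:=\psi^k$, and for $\lambda\ge 1$ put $\phi_\lambda(x):=\phi(x/\lambda)$. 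Testing the two equations in \eqref{eq:1.1} against $\phi_\lambda$ and taking suitable real combinations normalized by $\beta_1,\beta_2$, one is led to scalar quantities $F_\lambda(t)$ and $G_\lambda(t)$ that are, up to fixed positive constants, suitable real parts of $\int_{\mathbb R^n}u(t,x)\phi_\lambda(x)\,dx$ and $\int_{\mathbb R^n}v(t,x)\phi_\lambda(x)\,dx$. Thanks to $u_0,v_0\ge0$ and $\alpha_1,\alpha_2<0$ these are nonnegative, positive at $t=0$, and satisfy
\[
F_\lambda'\ \ge\ c_1\!\int_{\mathbb R^n}\!|v|^p\phi_\lambda\,dx\ -\ |\alpha_1|\Big|\!\int_{\mathbb R^n}\!u\,\Delta\phi_\lambda\,dx\Big|,\qquad
G_\lambda'\ \ge\ c_2\!\int_{\mathbb R^n}\!|u|^q\phi_\lambda\,dx\ -\ |\alpha_2|\Big|\!\int_{\mathbb R^n}\!v\,\Delta\phi_\lambda\,dx\Big|.
\]
Since $\int\phi_\lambda=\lambda^n\int\phi<\infty$, Jensen's inequality gives $\int|v|^p\phi_\lambda\gtrsim\lambda^{-n(p-1)}G_\lambda^{\,p}$ and $\int|u|^q\phi_\lambda\gtrsim\lambda^{-n(q-1)}F_\lambda^{\,q}$ (when $q\ge1$; the case $q<1$, where this step fails, is addressed below), so that --- were it not for the two Laplacian terms --- $(F_\lambda,G_\lambda)$ would already be a super-solution of \eqref{eq:1.14} with $C_1=0$.

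The crux --- and the step I expect to be the main obstacle, as the abstract emphasizes --- is to absorb the Laplacian terms $\int u\,\Delta\phi_\lambda$ and $\int v\,\Delta\phi_\lambda$ without destroying the natural blow-up rate. Here the choice $\phi=\psi^k$ enters through the pointwise bound $|\Delta\phi_\lambda|\le Ck^2\lambda^{-2}\phi_\lambda^{\,1-2/k}$: by H\"older's inequality with $k$ large (and using that $p>1$ automatically, since $pq>1$ and $p\ge q$) these error terms are dominated by a small multiple of the coupled nonlinearities plus a remainder bounded by $C\lambda^{-2}F_\lambda$, respectively $C\lambda^{-2}G_\lambda$. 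Performing this at the level of the product $h_\lambda:=F_\lambda G_\lambda$ --- where $\frac{d}{dt}h_\lambda=F_\lambda'G_\lambda+F_\lambda G_\lambda'$ allows the error from one equation to be swallowed by the nonlinearity of the other --- one reaches Mochizuki's inequality \eqref{eq:1.15} in the form
\[
\frac{d}{dt}h_\lambda+C_1\lambda^{-2}h_\lambda\ \ge\ C_3\,\lambda^{-\sigma}\,h_\lambda^{\,\theta},\qquad
\theta=\frac{(p+1)(q+1)}{p+q+2},\qquad \sigma=\frac{2n(pq-1)}{p+q+2},
\]
the exponents being those produced by the weighted AM--GM inequality applied to $\lambda^{-n(p-1)}G_\lambda^{p+1}+\lambda^{-n(q-1)}F_\lambda^{q+1}$. (When $q\le1$ the H\"older step for the $u$-integral degenerates and must be replaced by an argument exploiting the parabolic smoothing in the first equation; I expect this to be the only genuine extra complication.)

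Once \eqref{eq:1.15} is in hand, the rest is ODE theory. Comparing $h_\lambda$ with an explicit blow-up sub-solution of $y'+C_1\lambda^{-2}y=C_3\lambda^{-\sigma}y^{\theta}$ (as in the passage from \eqref{eq:1.14} to \eqref{eq:1.16}) shows that $h_\lambda$, hence $(u,v)$, blows up in finite time, with $T\lesssim\lambda^{\sigma}h_\lambda(0)^{-(\theta-1)}$ as soon as $h_\lambda(0)\gtrsim\lambda^{(\sigma-2)/(\theta-1)}$. Since $F_\lambda(0)$ is comparable to $\int u_0\phi_\lambda$, while $G_\lambda(0)$-type quantities are bounded below in terms of $F_\lambda(0)$ by integrating the $G$-inequality on a short initial interval, optimizing the resulting bound over $\lambda\ge1$ --- the competition between the exponents $\lambda^{-2}$ and $\lambda^{-\sigma}$ with $\sigma=\tfrac{2n(pq-1)}{p+q+2}$ being precisely what produces the shift $-\tfrac n2$ in \eqref{eq:1.21} --- yields \eqref{eq:1.21} for a test function $\phi$ of the above type taken at the optimal scale. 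Finally, the separate rates \eqref{eq:1.22}--\eqref{eq:1.23} follow by a bootstrap: from $h_\lambda\gtrsim(T_1-t)^{-1/(\theta-1)}=(T_1-t)^{-(p+q+2)/(pq-1)}$ together with the identity underlying \eqref{eq:1.17} --- which forces $F_\lambda^{\,q+1}\sim\frac{q+1}{p+1}G_\lambda^{\,p+1}$ near the blow-up time --- one recovers $\int u\phi\gtrsim(T_1-t)^{-(p+1)/(pq-1)}$ and $\int v\phi\gtrsim(T_1-t)^{-(q+1)/(pq-1)}$.
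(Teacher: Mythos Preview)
Your route diverges from the paper's in two places, and in each there is a genuine gap.

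\textbf{The Laplacian term.} With $\phi=\psi^k$ you assert that H\"older bounds $\bigl|\int u\,\Delta\phi_\lambda\bigr|$ by ``a small multiple of the coupled nonlinearities plus $C\lambda^{-2}F_\lambda$''. But $|\Delta\phi_\lambda|\le C\lambda^{-2}\phi_\lambda^{1-2/k}$ and $\phi_\lambda^{1-2/k}\ge\phi_\lambda$ where $\phi_\lambda\le1$, so there is no pointwise control of the error by $F_\lambda$; the H\"older route instead produces $(\int|u|^q\phi_\lambda)^{1/q}$, which is the nonlinearity from the \emph{other} equation, and at the product level this does not close (Young's inequality leaves a residual $G_\lambda^{q'}$ term that is not dominated by $G_\lambda^{p+1}$ unless $q'\le p+1$, i.e.\ $q\ge1+1/p$, which is not assumed). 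The paper sidesteps this completely by taking $\phi$ to be the square of a Dirichlet eigenfunction on $B(1)$, so that $-\Delta\phi\le\lambda\phi$ \emph{pointwise}. Then, using positivity of $\mathrm{Re}(\overline{\beta_1}u)$ (which is where $\alpha_1,\alpha_2<0$ and the sign assumptions on the data enter), one has directly $\int\mathrm{Re}(\overline{\beta_1}u)(-\Delta\phi_R)\le\lambda R^{-2}U$: the Laplacian contributes a pure linear damping $\lambda R^{-2}U$, with no absorption into any nonlinearity at all.

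\textbf{From the product to the components.} Even granting a Mochizuki-type inequality for $h_\lambda=F_\lambda G_\lambda$, your recovery of the separate rates via \eqref{eq:1.17} does not go through: \eqref{eq:1.17} is an identity for \emph{solutions} of the undamped ODE system, whereas $(F_\lambda,G_\lambda)$ is only a super-solution of a damped system, and there is no mechanism forcing $F_\lambda^{\,q+1}\sim G_\lambda^{\,p+1}$ near blow-up. The paper never passes through the product. Having obtained the clean componentwise system $U'+|\alpha_1|\lambda R^{-2}U\ge C_p'V^p$, $V'+|\alpha_2|\lambda R^{-2}V\ge C_q'U^q$, it first inflates the damping coefficients to the asymmetric pair $\tfrac{\omega}{q+1},\tfrac{\omega}{p+1}$ with $\omega=(p+1)\max(|\alpha_1|,|\alpha_2|)\lambda R^{-2}$, and then applies Corollary \ref{Corollary:2.4}: for the resulting ODE system $C_qf^{q+1}e^{\omega t}-C_pg^{p+1}e^{\omega t}$ is conserved, which reduces everything to a single closed ODI for $e^{\omega t/(p+1)}g$ and yields the componentwise lower bounds \eqref{eq:1.22}--\eqref{eq:1.23} directly; the comparison with $(U,V)$ is supplied by the system-level Lemma \ref{Lemma:2.2}. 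The optimization over the scale $R$ then gives \eqref{eq:1.21}.
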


We collect estimates of ODE systems \eqref{eq:1.14} and \eqref{eq:1.16}
in Section \ref{section:2}.
Then we give a proof of Propositions \ref{Theorem:1.1}
and \ref{Theorem:1.2} in Sections \ref{section:4} and \ref{section:3}, respectively.


\section{Preparation: ODE Arguments}
\label{section:2}

In this section,
we collect sub-solutions for some weakly coupled ODE systems
connected to \eqref{eq:1.1}.
In particular,
we show that solutions of weakly coupled systems of two ODEs
are larger than solutions of single ODEs with a modified nonlinearity.

Here, the following solutions for single ODEs
give the basis to obtain the explicit sub-solutions
of ODE systems.

\begin{Lemma}
Let $\rho > 1$, $\mu >0$, and $f_0 > 0$.
Set
	\begin{align*}
	T_f&:=\mu^{-1} (\rho-1)^{-1} f_0^{1-\rho}\\
	f(t) &:= \left\{ f_0^{1-\rho} - (\rho -1) \mu t \right\}^{-\frac{1}{\rho-1}},\ 
	\text{for}\ t\in \lbrack 0,T_f).
	\end{align*}
Then $f\in C^{\infty}((0,T_f))$ is the unique solution to the Cauchy problem for the ODE
	\[
	\begin{cases}
	f'(t) = \mu f(t)^\rho,\\
	f(0) = f_0.
	\end{cases}
	\]
\end{Lemma}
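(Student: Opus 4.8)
The plan is to verify directly that the stated function $f$ solves the Cauchy problem, and then to obtain uniqueness from the fact that any solution stays strictly positive, on which region the nonlinearity $y\mapsto \mu y^{\rho}$ is locally Lipschitz.

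First I would record that $f$ is well defined and smooth on $(0,T_f)$. Set $h(t):=f_0^{1-\rho}-(\rho-1)\mu t$. Since $h$ is affine with $h(0)=f_0^{1-\rho}>0$ and $h(T_f)=0$, it is strictly positive on $[0,T_f)$, so $f=h^{-1/(\rho-1)}$ is a positive $C^{\infty}$ function there, being the composition of the smooth positive function $h$ with the smooth map $s\mapsto s^{-1/(\rho-1)}$ on $(0,\infty)$. Differentiating and using $h'(t)=-(\rho-1)\mu$ and $-\tfrac{1}{\rho-1}-1=-\tfrac{\rho}{\rho-1}$,
\[
f'(t) = -\frac{1}{\rho-1}\, h(t)^{-\frac{1}{\rho-1}-1} h'(t)
= -\frac{1}{\rho-1}\, h(t)^{-\frac{\rho}{\rho-1}}\bigl(-(\rho-1)\mu\bigr)
= \mu\, h(t)^{-\frac{\rho}{\rho-1}} = \mu f(t)^{\rho},
\]
which is the ODE; and $f(0)=(f_0^{1-\rho})^{-1/(\rho-1)}=f_0$ because $f_0>0$ and $1-\rho=-(\rho-1)$. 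Hence $f$ is a solution on $[0,T_f)$, smooth on the open interval.

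For uniqueness, let $g\in C^1([0,T^*))$ be any solution of $g'=\mu g^{\rho}$, $g(0)=f_0$, on an interval with $T^*\le T_f$. Since $g(0)=f_0>0$ and $g'=\mu g^{\rho}\ge 0$ while $g>0$, the solution is positive and nondecreasing on a maximal subinterval $[0,\tau)$, and on the open region $(0,\infty)$ the vector field $F(y)=\mu y^{\rho}$ is $C^{1}$, hence locally Lipschitz. One can then conclude in either of two equivalent ways: invoke the Picard--Lindel\"of uniqueness theorem together with a connectedness argument to get $g\equiv f$ on $[0,\min(\tau,T_f))$; or separate variables, noting that on $[0,\tau)$ one has $\frac{d}{dt}\bigl(\tfrac{g^{1-\rho}}{1-\rho}\bigr)=g^{-\rho}g'=\mu$, so $g(t)^{1-\rho}=f_0^{1-\rho}-(\rho-1)\mu t=h(t)$, and raising to the power $-1/(\rho-1)$ (legitimate since both sides are positive) gives $g=f$. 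In either case $g$ agrees with the nowhere-vanishing function $f$ as long as it exists, so $g$ cannot reach $0$ before $T_f$, whence $\tau\ge T^*$ and $g=f$ on $[0,T^*)$.

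There is no real obstacle in this lemma; the only point deserving a word of care is exactly the positivity just used — any solution is automatically positive and increasing, so the nonlinearity is Lipschitz along it and the explicit blow-up time $T_f$ is genuinely the maximal existence time, not an artifact of the formula. This is what the positivity of $h$ on $[0,T_f)$ and the sign of $g'=\mu g^{\rho}$ supply.
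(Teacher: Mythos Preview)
Your proof is correct. The paper does not supply a proof of this lemma at all --- it is stated as an elementary, well-known fact and left without argument --- so there is nothing to compare your approach against; your direct verification plus the standard Picard--Lindel\"of/separation-of-variables uniqueness argument is exactly what one would write if a proof were demanded.
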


It is well known that
we can compare super-solutions and sub-solutions
for single ODEs with appropriate initial datum.
Since \eqref{eq:1.15} holds,
the comparison principle for singe ODE implies blow-up of \eqref{eq:1.14}
Moreover,
Kamke \cite{bibx:21} showed that
the comparison principle holds
for weakly coupled ODE systems.
In particular, the following Lemma \ref{Lemma:2.2} holds
and we can obtain lower bounds of each solutions
instead of sum of product of solutions.
We also refer the reader the text book of Hsu \cite{bibx:17}
and remark that their statements are more general than Lemma \ref{Lemma:2.2}.

\begin{Lemma}[{\cite[Theorem 2.6.3]{bibx:17}\cite[Satz 6]{bibx:21}}]
\label{Lemma:2.2}
Let $C \geq 0$, $T>0$,
and let $p,q>0$ satisfy $pq > 1$.
Let $\mathcal F, \mathcal G$ be non-decreasing functions on $\mathbb R$.
Let $f_1,f_2,g_1,g_2 \in C^1([0,T))$ satisfy ODIs,
	\[
	\begin{cases}
	\frac{d}{dt} f_2(t) + C f_2(t) \geq \mathcal G(g_2(t)),\\
	\frac{d}{dt} g_2(t) + C g_2(t) \geq \mathcal F(f_2(t)),
	\end{cases}
	\qquad
	\begin{cases}
	\frac{d}{dt} f_1(t) + C f_1(t) \leq \mathcal G(g_1(t)),\\
	\frac{d}{dt} g_1(t) + C g_1(t) \leq \mathcal F(f_1(t))
	\end{cases}
	\]
for $0 \leq t < T$
with the initial condition $f_2(0) > f_1(0)$ and $g_2(0) > g_1(0)$.
Then $f_2(t) > f_1(t)$ and $g_2(t) > g_1(t)$ for $0 \leq t < T$.
\end{Lemma}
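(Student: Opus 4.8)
The plan is to argue by contradiction with a first-contact time, working with the differences $z := f_2 - f_1$ and $w := g_2 - g_1$, which lie in $C^1([0,T))$ and satisfy $z(0) > 0$ and $w(0) > 0$. First I would subtract the two pairs of differential inequalities, obtaining $z' + Cz \geq \mathcal G(g_2) - \mathcal G(g_1)$ and $w' + Cw \geq \mathcal F(f_2) - \mathcal F(f_1)$ on $[0,T)$. Since $\mathcal G$ and $\mathcal F$ are non-decreasing, this gives the key one-sided bounds: $z'(t) + Cz(t) \geq 0$ at every $t$ where $w(t) \geq 0$, and symmetrically $w'(t) + Cw(t) \geq 0$ at every $t$ where $z(t) \geq 0$. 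Note that only the monotonicity of $\mathcal F, \mathcal G$ is used here; neither their continuity nor the hypotheses $p,q>0$, $pq>1$ enter (these are carried along only because the lemma is applied with $\mathcal F,\mathcal G$ built from such powers).

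Next, suppose the conclusion fails, and set $t_0 := \inf\{ t \in [0,T) : \min(z(t), w(t)) \leq 0 \}$. By continuity of $z, w$ and the strict initial inequalities, $t_0 \in (0,T)$, one has $z > 0$ and $w > 0$ on $[0, t_0)$, and $\min(z(t_0), w(t_0)) = 0$. Without loss of generality assume $z(t_0) = 0$, the case $w(t_0) = 0$ being identical after exchanging the two components.

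Then $w \geq 0$ on the whole interval $[0, t_0]$, so by the bound recorded above $z'(t) + Cz(t) \geq 0$, i.e. $\tfrac{d}{dt}\big(e^{Ct} z(t)\big) \geq 0$, for all $t \in [0, t_0]$. Integrating from $0$ to $t_0$ yields $e^{Ct_0} z(t_0) \geq z(0) > 0$, hence $z(t_0) > 0$, contradicting $z(t_0) = 0$. Therefore no such $t_0$ exists, and $z > 0$, $w > 0$ throughout $[0,T)$, which is the assertion.

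The step I would flag as the real content is the passage from pointwise to interval information. The naive attempt — inspecting only the sign of $z'(t_0)$ at the contact time — does not close, since the mere monotonicity of $\mathcal G$ yields $z'(t_0) = 0$ rather than a contradiction. One must instead exploit the quasimonotone (cooperative) structure of the coupling to promote this into the differential inequality $z' + Cz \geq 0$ valid on the entire subinterval $[0, t_0]$, on which $w$ is known to be nonnegative, and then integrate against the factor $e^{Ct}$; the roles of $z$ and $w$ are symmetric. Alternatively, one may simply invoke the classical Kamke comparison theorem for quasimonotone systems, which is precisely the content of the cited references \cite{bibx:17, bibx:21}.
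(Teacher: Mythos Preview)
Your argument is correct and uses the same basic ingredients as the paper's proof---subtracting the two pairs of inequalities, the integrating factor $e^{Ct}$, and the monotonicity of $\mathcal F,\mathcal G$---but the organization is more efficient. The paper argues separately: it takes $T_f$, the first zero of $f_2-f_1$, applies the mean value theorem to $\Phi(t)=e^{Ct}(f_2-f_1)$ to locate a point $T_f'<T_f$ with $\Phi'(T_f')<0$, uses monotonicity of $\mathcal G$ to deduce $g_2(T_f')<g_1(T_f')$, and hence finds an earlier first zero $T_g<T_f$ of $g_2-g_1$; the symmetric argument then yields $T_f<T_g$, a contradiction. Your choice to work instead with the first contact time of $\min(z,w)$ collapses this two-step leapfrog into a single integration of $(e^{Ct}z)'\geq 0$ on $[0,t_0]$, since you already know $w\geq 0$ there. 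Both proofs exploit the quasimonotone structure in the same way; yours just extracts the consequence more directly and avoids the intermediate mean-value step.
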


For the reader's convenience,
we give a proof of the lemma.

\begin{proof}
On the contrary, we assume that there exists $t_0\in (0,T)$ such that $f_2(t_0) = f_1(t_0)$.
Then, we can define
	\[
	T_f = \min \{ 0 < t \le t_0 ;\ f_1(t) = f_2(t) \}.
	\]
Thus, the function
$\Phi(t) := e^{Ct}(f_2(t) - f_1(t))\in C^1((0,T))$
satisfies
$\Phi(t) > 0$ for $t \in \lbrack 0,T_f)$ and $\Phi (T_f) = 0$.
Hence, there exists $T_f' \in (0,T_f)$ such that
$\Phi'(T_f') < 0$.
On the other hand, by the ODIs, we see that
	\[
	\Phi'(t) = 
	\frac{d}{dt} \left\{ e^{Ct} (f_2(t) - f_1(t) )\right\}
	\geq e^{Ct} \{ \mathcal G(g_2(t)) - \mathcal G(g_1(t)) \},\ \text{for}\ t\in [0,T),
	\]
which leads to
$e^{CT_f'} \{ \mathcal G(g_2(T_f')) - \mathcal G(g_1(T_f')) \} < 0$.
This estimate and the monotonicity of
$\mathcal{G}$
imply
$g_2(T_f') < g_1(T_f')$.
Noting that
$g_2(0) > g_1(0)$
and $g_2$ and $g_1$ are continuous on $[0,T)$, there exists $t_1\in (0,T_f')$ such that $g_1(t_1)=g_2(t_1)$. Therefore we can also define
	\[
	T_g = \min \{ 0 < t \le t_1 ;\ g_1(t) = g_2(t) \}.
	\]
By the definitions of $T_f$, $T_f'$ and $T_g$, we obtain $T_f > T_f' > T_g$.
However, $T_g > T_f$ holds by the same argument, which leads to a contradiction. Therefore for any $t\in (0,T)$, $f_2(t) \ne f_1(t)$. By $f_2(0)>f_1(0)$ and the continuity of the functions $f_1$ and $f_2$, we have $f_2(t)>f_1(t)$ for $t\in [0,T)$. In the similar manner, we can prove $g_2(t)>g_1(t)$ for $t\in [0,T)$, which completes the proof of the lemma.
\end{proof}

In the following proposition,
we show explicit solutions for weakly coupled systems of ordinary differential equations,
which will be used to prove blow-up of solutions for the weakly coupled systems of parabolic equations on the $n$-dimensional Euclidean space $\mathbb{R}^n$
(see Proposition \ref{Proposition:4.1}).

\begin{Proposition}[{\cite[Lemma 4.1]{bibx:30}}]
\label{Proposition:2.3}
Let $f_0, g_0 > 0$ and $C_p, C_q, T >0$,
and let $p,q >0$ satisfy $pq > 1$.
Let $(f,g) \in ( C^1(\lbrack 0, T))^2$ be non-negative and a solution of
	\[
	\begin{cases}
	\frac{d}{dt} f(t) = (p+1) C_p g(t)^p,
	\quad t \in \lbrack 0,T),\\
	\frac{d}{dt} g(t) = (q+1) C_q f(t)^q,
	\quad t \in \lbrack 0,T),\\
	f(0) = f_0,\quad g(0) = g_0.
	\end{cases}
	\]
Then the identity
	\begin{equation}
	\label{eq:2.1}
	C_q f(t)^{q+1} - C_q f_0^{q+1}
	= C_p g(t)^{p+1} - C_p g_0^{p+1}
	\end{equation}
holds for any $0 \leq t < T$.
Moreover, if $C_q f_0^{q+1} \geq C_p g_0^{p+1}$,
then the following estimates hold:
	\begin{align}
	g(t)
	&\geq \Big\{
	C_p^{\frac{pq-1}{(p+1)(q+1)}} C_q^{-\frac{pq-1}{(p+1)(q+1)}}
	f_0^{-\frac{pq-1}{p+1}}
	- 2^{\frac{-pq}{q+1}} (pq-1) C_p^{\frac{q}{q+1}} C_q^{\frac{1}{q+1}} t
	\Big\}^{- \frac{q+1}{pq-1}}
	\label{eq:2.2} \\
	&- \big( C_p^{-1} C_q f_0^{q+1} - g_0^{p+1} \big)^{\frac{1}{p+1}}
	\quad \mbox{for} \quad 0 < t < T,
	\nonumber \\
	T
	&\leq \frac{2^{\frac{pq}{q+1}}}{pq-1}
	C_p^{-\frac{1}{p+1}} C_q^{-\frac{p}{p+1}}
	f_0^{-\frac{pq-1}{p+1}}.
	\label{eq:2.3}
	\end{align}
We note that
the right-hand side of \eqref{eq:2.3}
is the blow-up time
of the right-hand side of \eqref{eq:2.2}.
\end{Proposition}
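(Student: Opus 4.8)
The plan is to establish the identity \eqref{eq:2.1} first and then to reduce the $2\times 2$ system to a single scalar differential inequality that can be integrated explicitly. For \eqref{eq:2.1}, I would multiply the equation for $f$ by $C_q f(t)^q$ and the equation for $g$ by $C_p g(t)^p$; both products then have the same right-hand side $(p+1)(q+1)C_pC_q f(t)^q g(t)^p$, while their left-hand sides are $\frac{d}{dt}\big(C_q f(t)^{q+1}\big)$ and $\frac{d}{dt}\big(C_p g(t)^{p+1}\big)$. Hence $\frac{d}{dt}\big(C_q f(t)^{q+1}-C_p g(t)^{p+1}\big)=0$ on $[0,T)$, and integrating from $0$ to $t$ gives \eqref{eq:2.1}; non-negativity of $(f,g)$ is what makes the non-integer powers $f^q,g^p$ meaningful throughout.

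Now assume $C_q f_0^{q+1}\ge C_p g_0^{p+1}$, set $K:=C_q f_0^{q+1}-C_p g_0^{p+1}\ge0$, $\delta:=(C_p^{-1}K)^{1/(p+1)}=(C_p^{-1}C_q f_0^{q+1}-g_0^{p+1})^{1/(p+1)}\ge0$, and $h:=g+\delta\in C^1([0,T))$. By \eqref{eq:2.1}, $C_q f(t)^{q+1}=C_p\big(g(t)^{p+1}+\delta^{p+1}\big)$, and convexity of $x\mapsto x^{p+1}$ on $[0,\infty)$ (Jensen's inequality at the midpoint of $g$ and $\delta$) gives $g^{p+1}+\delta^{p+1}\ge 2^{-p}(g+\delta)^{p+1}=2^{-p}h^{p+1}$, hence $f\ge\big(2^{-p}C_pC_q^{-1}\big)^{1/(q+1)}h^{(p+1)/(q+1)}$. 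Substituting into $h'=g'=(q+1)C_q f^q$ yields
\[
h'(t)\ \ge\ \mu\,h(t)^{\rho},\qquad
\rho:=\frac{q(p+1)}{q+1},\qquad
\mu:=(q+1)\,2^{-\frac{pq}{q+1}}C_p^{\frac{q}{q+1}}C_q^{\frac{1}{q+1}},
\]
where $\rho>1$ and $\rho-1=\frac{pq-1}{q+1}$ exactly because $pq>1$.

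Since $h(0)=g_0+\delta>0$, integrating $\frac{d}{dt}h^{1-\rho}=(1-\rho)h^{-\rho}h'\le-(\rho-1)\mu$ gives $h(t)^{1-\rho}\le h_0^{1-\rho}-(\rho-1)\mu t$ on $[0,T)$; positivity of the left-hand side forces $T\le h_0^{1-\rho}/\big((\rho-1)\mu\big)$, and since $x\mapsto x^{-1/(\rho-1)}$ is decreasing we also obtain $h(t)\ge\{h_0^{1-\rho}-(\rho-1)\mu t\}^{-1/(\rho-1)}$ for $t\in[0,T)$ (equivalently, $h$ dominates the explicit solution of the first Lemma of this section with exponent $\rho$ and coefficient $\mu$, or one simply invokes the scalar comparison principle). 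It remains to turn $h_0^{1-\rho}$ into an expression in $f_0$ alone: evaluating \eqref{eq:2.1} at $t=0$ together with the superadditivity $(g_0+\delta)^{p+1}\ge g_0^{p+1}+\delta^{p+1}$ (valid for $p+1\ge1$) gives $C_q f_0^{q+1}=C_p\big(g_0^{p+1}+\delta^{p+1}\big)\le C_p h_0^{p+1}$, so
\[
h_0^{1-\rho}=h_0^{-\frac{pq-1}{q+1}}\ \le\ C_p^{\frac{pq-1}{(p+1)(q+1)}}\,C_q^{-\frac{pq-1}{(p+1)(q+1)}}\,f_0^{-\frac{pq-1}{p+1}}\ =:\ X .
\]
Replacing $h_0^{1-\rho}$ by the larger quantity $X$ only shrinks the comparison function, so $h(t)\ge\{X-(\rho-1)\mu t\}^{-1/(\rho-1)}$; subtracting $\delta$ and using $\frac{1}{\rho-1}=\frac{q+1}{pq-1}$ gives \eqref{eq:2.2}, while $T\le h_0^{1-\rho}/((\rho-1)\mu)\le X/((\rho-1)\mu)$ gives \eqref{eq:2.3} after the exponents of $C_p$ and $C_q$ are simplified (they collapse to $-\frac{1}{p+1}$ and $-\frac{p}{p+1}$), which simultaneously confirms that the right-hand side of \eqref{eq:2.3} is the blow-up time of the right-hand side of \eqref{eq:2.2}.

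The one genuinely delicate point is the choice of the shift $h=g+\delta$: bounding $f$ below in terms of $g$ alone would place $g_0$ inside the braces with the inequality pointing in the wrong direction, so it is essential to absorb the conserved constant $K$ into the unknown variable, at the price of the harmless factor $2^{-p}$ produced by Jensen's inequality; everything after that is routine bookkeeping of exponents and a single integration of a scalar ODE.
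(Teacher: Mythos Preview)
Your proof is correct and follows essentially the same route as the paper's: both derive the conservation law \eqref{eq:2.1} by matching the derivatives of $C_qf^{q+1}$ and $C_pg^{p+1}$, then use the convexity inequality $a^{p+1}+b^{p+1}\ge 2^{-p}(a+b)^{p+1}$ to reduce to a scalar ODI for the shifted quantity $g+\delta$ with exponent $\rho=\frac{q(p+1)}{q+1}$, integrate it, and finally replace $(g_0+\delta)^{1-\rho}$ by the $f_0$-expression via $(g_0+\delta)^{p+1}\ge g_0^{p+1}+\delta^{p+1}$ (what the paper calls the ``triangle inequality'' and you call ``superadditivity''). The only differences are presentational---you name the shifted function $h$ and the constants $\rho,\mu,X$ explicitly---but the underlying argument is identical.
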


For the reader's convenience,
we give a proof of the lemma.

\begin{proof}
Set $F := C_q f^{q+1}\in C^1([0,T))$ and $G := C_p g^{p+1}\in C^1([0,T))$.
Then,
	\[
	F'(t) = (q+1) C_q f(t)^q f'(t)
	= (q+1)(p+1) C_q C_p f(t)^q g(t)^p = G'(t)
	\]
for $t \in [0,T)$,
which implies that the identity $\{F(t)-G(t)\}'=0$ for $t\in [0,T)$. Thus by integrating
it with respect to time over $[0,t)$, we have (\ref{eq:2.1}). Therefore the identities
	\begin{align*}
	f(t)
	&= \left\{ C_q^{-1} C_p g(t)^{p+1} - C_q^{-1} C_p g_0^{p+1} + f_0^{q+1}
	\right\}^{\frac{1}{q+1}},\\
	g(t)
	&= \left\{ C_p^{-1} C_q f(t)^{q+1} - C_p^{-1} C_q f_0^{q+1} + g_0^{p+1}
	\right\}^{\frac{1}{p+1}},\\
	f'(t)
	&= (p+1) C_p \left\{ C_p^{-1} C_q f(t)^{q+1} - C_p^{-1} C_q f_0^{q+1} + g_0^{p+1}
	\right\}^{\frac{p}{p+1}},\\
	g'(t)
	&= (q+1) C_q \left\{ C_q^{-1} C_p g(t)^{p+1} - C_q^{-1} C_p g_0^{p+1} + f_0^{q+1}
	\right\}^{\frac{q}{q+1}}.
	\end{align*}
hold for any $t\in [0,T)$. Since $F(0) \geq G(0)$ by the assumption, the estimate 
\[
   a^{p+1}+b^{p+1}\ge 2^{-p}(a+b)^{p+1}\ \ \text{for}\ a,b\ge 0, 
\]
implies
	\begin{align*}
	g'(t)
	&= (q+1) C_q \left\{ C_q^{-1} C_p g(t)^{p+1} - C_q^{-1} C_p g_0^{p+1} + f_0^{q+1}
	\right\}^{\frac{q}{q+1}}\\
	&= (q+1) C_p^{\frac{q}{q+1}} C_q^{\frac{1}{q+1}}
	\left\{ g(t)^{p+1} + C_p^{-1} C_q f_0^{q+1} - g_0^{p+1}
	\right\}^{\frac{q}{q+1}}\\
	&\geq 2^{\frac{-pq}{q+1}} (q+1) C_p^{\frac{q}{q+1}} C_q^{\frac{1}{q+1}}
	\left\{ g(t) + \big( C_p^{-1} C_q f_0^{q+1} - g_0^{p+1} \big)^{\frac{1}{p+1}}
	\right\}^{\frac{p+1}{q+1} q},
	\end{align*}
for $t\in [0,T)$. Then by solving the ODI and
the triangle inequality, we obtain
	\begin{align}
	&g(t) + \big( C_p^{-1} C_q f_0^{q+1} - g_0^{p+1} \big)^{\frac{1}{p+1}}\notag\\
	&\geq \Big[
	\{ g_0 + \big( C_p^{-1} C_q f_0^{q+1} - g_0^{p+1} \big)^{\frac{1}{p+1}}
	\}^{- \frac{pq-1}{q+1}}
	- 2^{\frac{-pq}{q+1}} (pq-1) C_p^{\frac{q}{q+1}} C_q^{\frac{1}{q+1}} t
	\Big]^{- \frac{q+1}{pq-1}}\notag\\
	&\geq \Big\{
	C_p^{\frac{pq-1}{(p+1)(q+1)}} C_q^{-\frac{pq-1}{(p+1)(q+1)}}
	f_0^{-\frac{pq-1}{p+1}}
	- 2^{\frac{-pq}{q+1}} (pq-1) C_p^{\frac{q}{q+1}} C_q^{\frac{1}{q+1}} t
	\Big\}^{- \frac{q+1}{pq-1}},
	\label{eq:2.4}
	\end{align}
for $t\in [0,T)$, where we have used the identity
	\[
	\frac{p+1}{q+1} q - 1 = \frac{pq-1}{q+1}.
	\]
From the right hand side of (\ref{eq:2.4}), we see that the maximal existence time $T$ of the function $(f,g)$ is estimated by
	\begin{align*}
	T
	&\leq \frac{2^{\frac{pq}{q+1}}}{pq-1} C_p^{-\frac{q}{q+1}} C_q^{-\frac{1}{q+1}}
	C_p^{\frac{pq-1}{(p+1)(q+1)}} C_q^{-\frac{pq-1}{(p+1)(q+1)}}
	f_0^{\frac{1-pq}{p+1}}\\
	&= \frac{2^{\frac{pq}{q+1}}}{pq-1}
	C_p^{-\frac{1}{p+1}} C_q^{-\frac{p}{p+1}}
	f_0^{\frac{1-pq}{p+1}},
	\end{align*}
where we have used the identities
	\begin{align*}
	\frac{pq-1}{(p+1)(q+1)} - \frac{q}{q+1}
	&= - \frac{q+1}{(p+1)(q+1)} = - \frac{1}{p+1},\\
	- \frac{pq-1}{(p+1)(q+1)} - \frac{1}{q+1}
	&= - \frac{pq-1+p+1}{(p+1)(q+1)} = - \frac{p}{p+1}.
	\end{align*}
This completes the proof of the lemma.
\end{proof}

Proposition \ref{Proposition:2.3} gives the main idea
of the proof of Theorem \ref{Theorem:1.1} and Theorem \ref{Theorem:1.2}.
However, unlike the single heat equation with the power-type nonlinearity,
localized average of solutions to \eqref{eq:1.1} with $\mathbb{X}=\mathbb{R}^n$
does not enjoy the ODI of Proposition \ref{Proposition:2.3}
because we are considering the weak coupled nonlinearity.
More precisely, in the case of single heat equation \eqref{eq:1.3},
the localized average of solution to \eqref{eq:1.3}
is a super-solution for the ODI \eqref{eq:1.4},
since the localized average of $- \Delta u$ can be absorbed into the nonlinearity $u^p$.
On the other hand,
in the case of heat system \eqref{eq:1.1} with $\mathbb{X}=\mathbb{R}^n$,
since the nonlinearity is weakly coupled,
the localized averages of each component of the solution with the Laplacian
can not be absorbed.
Thus we use the following modified ODE system
in order to treat the heat systems with $\mathbb{X}=\mathbb{R}^n$
and the weakly coupled nonlinearity (see Section \ref{section:4}).

\begin{Corollary}
\label{Corollary:2.4}
Let $f_0, g_0, \omega > 0$, $p,q >0$ with $pq>1$ and $C_p, C_q >0$.
Let $(f,g) \in \left(C^1([0,T))\right)^2$ be non-negative and a solution for
	\[
	\begin{cases}
	\frac{d}{dt} f(t) + \frac{\omega}{q+1} f(t) = (p+1) C_p g(t)^p,
	\quad t \in \lbrack 0,T),\\
	\frac{d}{dt} g(t) + \frac{\omega}{p+1} g(t) = (q+1) C_q f(t)^q,
	\quad t \in \lbrack 0,T),\\
	f_0 = f_0,\quad g_0 = g_0.
	\end{cases}
	\]
Then the identity
	\[
	C_q f(t)^{q+1} e^{\omega t} - C_q f_0^{q+1}
	= C_p g(t)^{p+1} e^{\omega t} - C_p g_0^{p+1}.
	\]
holds for any $ 0 \leq t < T$.
Moreover, if
	\begin{align}
	\max\bigg(
	2^{\frac{p+1}{q+1} \frac{pq}{pq-1}}
	(q+1)^{- \frac{p+1}{pq-1}}
	(p+1)^{- \frac{p+1}{pq-1}} \omega^{ \frac{p+1}{pq-1}}
	C_p^{-\frac{1}{pq-1}} C_q^{-\frac{p}{pq-1}},
	C_q^{-\frac{1}{q+1}} C_p^{\frac{1}{q+1}} g_0^{\frac{p+1}{q+1}}
	\bigg)
	< f_0,
	\label{eq:2.5}
	\end{align}
then, for $0 \leq t < T$,
	\begin{align*}
	&e^{\frac{\omega}{p+1} t} g(t)\\
	& \geq
	\left\{
	\bigg( \frac{C_p}{C_q} \bigg)^{\frac{pq-1}{(p+1)(q+1)}}
	f_0^{-\frac{pq-1}{p+1}}
	-
	2^{-\frac{pq}{q+1}} (q+1)(p+1) \omega^{-1}
	C_q^{\frac{1}{q+1}} C_p^{\frac{q}{q+1}}
	\left(1 - e^{-\frac{\omega (pq-1)}{(p+1)(q+1)} t}\right)
	\right\}^{-\frac{q+1}{pq-1}}\\
	&- ( C_q C_p^{-1} f_0^{q+1} - g_0^{p+1} )^{\frac{1}{p+1}}
\end{align*}
and
\begin{align*}
	T
	\leq
	- \frac{(q+1)(p+1)}{\omega (pq-1)} \log \bigg(
	1 -
	\frac{2^{\frac{pq}{q+1}}}{(q+1)(p+1)} \omega
	C_q^{-\frac{p}{p+1}} C_p^{-\frac{1}{p+1}}
	f_0^{-\frac{pq-1}{p+1}}
	\bigg)
	\end{align*}
hold.
\end{Corollary}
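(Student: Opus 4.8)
The plan is to repeat the argument of Proposition~\ref{Proposition:2.3}, the only genuinely new device being an exponential change of the unknowns that absorbs the linear damping terms. First I would establish the conservation identity. Put $F:=C_q f^{q+1}e^{\omega t}$ and $G:=C_p g^{p+1}e^{\omega t}$, both in $C^1([0,T))$. Differentiating and using the two equations of the system,
\[
F'(t)=(q+1)C_q f(t)^q e^{\omega t}\Big(f'(t)+\tfrac{\omega}{q+1}f(t)\Big)=(p+1)(q+1)C_pC_q\,f(t)^q g(t)^p e^{\omega t},
\]
and the very same computation produces the identical expression for $G'(t)$. Hence $(F-G)'\equiv 0$ on $[0,T)$, and integrating over $[0,t)$ yields the stated identity $C_q f(t)^{q+1}e^{\omega t}-C_q f_0^{q+1}=C_p g(t)^{p+1}e^{\omega t}-C_p g_0^{p+1}$.

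For the quantitative bounds I would set $D:=C_p^{-1}C_q f_0^{q+1}-g_0^{p+1}$, which is strictly positive by the second entry of the maximum in \eqref{eq:2.5} (this is also what makes $(C_qC_p^{-1}f_0^{q+1}-g_0^{p+1})^{1/(p+1)}$ real), and introduce $h(t):=e^{\frac{\omega}{p+1}t}g(t)$, so that $h'(t)=(q+1)C_q e^{\frac{\omega}{p+1}t}f(t)^q$. Solving the identity of the first step for $f$ and inserting $g=e^{-\frac{\omega}{p+1}t}h$ gives $f(t)^{q+1}=e^{-\omega t}C_q^{-1}C_p\big(h(t)^{p+1}+D\big)$. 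Substituting this into the equation for $h'$, collecting the exponentials via $\tfrac{1}{p+1}-\tfrac{q}{q+1}=-\tfrac{pq-1}{(p+1)(q+1)}$, and then applying the elementary inequality $a^{p+1}+b^{p+1}\ge 2^{-p}(a+b)^{p+1}$ with $b=D^{1/(p+1)}$, I would arrive at a single Bernoulli-type differential inequality for $H(t):=h(t)+D^{1/(p+1)}$, namely
\[
H'(t)\ge 2^{-\frac{pq}{q+1}}(q+1)C_q^{\frac{1}{q+1}}C_p^{\frac{q}{q+1}}\,e^{-\frac{\omega(pq-1)}{(p+1)(q+1)}t}\,H(t)^{1+\frac{pq-1}{q+1}},\qquad 0\le t<T,
\]
where I use $(p+1)\tfrac{q}{q+1}-1=\tfrac{pq-1}{q+1}$.

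Then I would integrate: dividing by $H^{1+\frac{pq-1}{q+1}}$ and integrating over $[0,t)$, with $\int_0^t e^{-\lambda s}\,ds=\lambda^{-1}(1-e^{-\lambda t})$ for $\lambda:=\frac{\omega(pq-1)}{(p+1)(q+1)}$, gives the explicit lower bound for $H(t)$, valid as long as the base in braces stays positive. To bring it to the stated shape I would use $H(0)=g_0+D^{1/(p+1)}\ge(g_0^{p+1}+D)^{1/(p+1)}=(C_p^{-1}C_q)^{1/(p+1)}f_0^{(q+1)/(p+1)}$ together with the monotonicity of $x\mapsto x^{-\frac{pq-1}{q+1}}$ to replace $H(0)$, which produces exactly the factor $(C_p/C_q)^{\frac{pq-1}{(p+1)(q+1)}}f_0^{-\frac{pq-1}{p+1}}$; recalling $H(t)=e^{\frac{\omega}{p+1}t}g(t)+(C_qC_p^{-1}f_0^{q+1}-g_0^{p+1})^{1/(p+1)}$ then gives the asserted lower bound for $e^{\frac{\omega}{p+1}t}g(t)$. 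Finally, the right-hand side becomes infinite precisely when the base vanishes, i.e. when $1-e^{-\lambda t}$ reaches $\frac{2^{pq/(q+1)}}{(q+1)(p+1)}\omega C_q^{-p/(p+1)}C_p^{-1/(p+1)}f_0^{-\frac{pq-1}{p+1}}$; the first entry of the maximum in \eqref{eq:2.5} is exactly the statement that this number is $<1$, so one can solve for $t$, and using $\lambda^{-1}=\frac{(p+1)(q+1)}{\omega(pq-1)}$ yields the claimed upper bound on $T$.

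The only conceptual step is the substitution $h=e^{\frac{\omega}{p+1}t}g$: once the damping has been absorbed this way, the problem collapses exactly to the setting of Proposition~\ref{Proposition:2.3}. So I do not expect a real obstacle, only bookkeeping — keeping track of the several algebraic identities in $p,q$ (for the exponent of the exponential, for the power of $H$, and for simplifying the powers of $C_p$, $C_q$ and of $2$ that have to match both entries of the maximum in \eqref{eq:2.5}), and confirming that $D>0$ and ``blow-up occurs at a finite time'' are precisely the two conditions encoded in \eqref{eq:2.5}. One should also note that the differential-inequality manipulations are legitimate on $[0,T)$ since $f,g\ge 0$ and $D>0$ keep every quantity raised to a fractional power nonnegative.
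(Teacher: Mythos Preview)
Your proposal is correct and follows essentially the same route as the paper: the paper likewise reduces to the single auxiliary function $\widetilde g(t)=e^{\frac{\omega}{p+1}t}g(t)+(C_qC_p^{-1}f_0^{q+1}-g_0^{p+1})^{1/(p+1)}$, derives the identical Bernoulli-type differential inequality via $a^{p+1}+b^{p+1}\ge 2^{-p}(a+b)^{p+1}$, integrates it, and then replaces $\widetilde g(0)$ by $(C_p^{-1}C_q)^{1/(p+1)}f_0^{(q+1)/(p+1)}$. The only cosmetic difference is that you build the exponential factor into $F,G$ from the outset (so that $(F-G)'\equiv 0$), whereas the paper first computes $F'+\omega F=G'+\omega G$ for $F=C_qf^{q+1}$, $G=C_pg^{p+1}$ and then solves the resulting linear ODE; the two are equivalent.
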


\begin{proof}
Set $F = C_q f^{q+1}\in C^1([0,T))$ and $G = C_p g^{p+1}\in C^1([0,T))$.
Then by the equation, we have
	\begin{align*}
	F'(t) + \omega F(t)
	&= (q+1) C_q f(t)^q \left( f'(t) + \frac{\omega}{q+1} f \right)\\
	&= (q+1)(p+1) C_q C_p f(t)^q g(t)^p\\
	&= G'(t) + \omega G(t),
	\end{align*}
for any $t\in [0,T)$, which implies that
	\begin{align*}
	F(t) - G(t) = (F(0) - G(0)) e^{-\omega t},
	\end{align*}
for any $t\in [0,T)$. Moreover we obtain
	\begin{align*}
	f(t)
	&= \left\{ C_q^{-1} C_p g(t)^{p+1}
	+ \big(f_0^{q+1} - C_q^{-1} C_p g_0^{p+1} \big) e^{- \omega t}
	\right\}^{\frac{1}{q+1}},\\
	g(t)
	&= \left\{ C_p^{-1} C_q f(t)^{q+1}
	+ \big( g_0^{p+1} - C_p^{-1} C_q f_0^{q+1}\big) e^{-\omega t}
	\right\}^{\frac{1}{p+1}},\\
	f'(t) + \frac{\omega}{q+1} f(t)
	&= (p+1) C_p \left\{ C_p^{-1} C_q f(t)^{q+1}
	+ ( g_0^{p+1} - C_p^{-1} C_q f_0^{q+1} ) e^{-\omega t}
	\right\}^{\frac{p}{p+1}},\\
	g'(t) + \frac{\omega}{p+1} g(t)
	&= (q+1) C_q \left\{ C_q^{-1} C_p g(t)^{p+1}
	+ (f_0^{q+1} - C_q^{-1} C_p g_0^{p+1} ) e^{- \omega t}
	\right\}^{\frac{q}{q+1}},
	\end{align*}
for $t\in [0,T)$. Since we have $F(0) \geq G(0)$ by the assumption of $f_0$, by the estimate $a^{p+1}+b^{p+1}\ge 2^{-p}(a+b)^{p+1}$ for $a,b\ge 0$, we have
	\begin{align*}
	&g'(t) + \frac{\omega}{p+1} g(t)\\
	&= (q+1) C_q \left\{ C_q^{-1} C_p g(t)^{p+1}
	+ \big(f_0^{q+1} - C_q^{-1} C_p g_0^{p+1} \big) e^{- \omega t}
	\right\}^{\frac{q}{q+1}}\\
	&= (q+1) C_q^{\frac{1}{q+1}} C_p^{\frac{q}{q+1}} e^{- \frac{\omega q}{q+1} t}
	\big( e^{\omega t} g(t)^{p+1}
	+ C_q C_p^{-1} f_0^{q+1} - g_0^{p+1}
	\big)^{\frac{q}{q+1}}\\
	&\geq 2^{-\frac{pq}{q+1}} (q+1) C_q^{\frac{1}{q+1}} C_p^{\frac{q}{q+1}}
	e^{- \frac{\omega q}{q+1} t}
	\left\{ e^{\frac{\omega}{p+1} t} g(t)
	+ ( C_q C_p^{-1} f_0^{q+1} - g_0^{p+1} )^{\frac{1}{p+1}}
	\right\}^{\frac{p+1}{q+1}q},
	\end{align*}
for any $t\in [0,T)$. Let
	\[
	\widetilde g(t) = e^{\frac{\omega}{p+1} t} g(t)
	+ ( C_q C_p^{-1} f_0^{q+1} - g_0^{p+1} )^{\frac{1}{p+1}}.
	\]
Then
	\begin{align*}
	\widetilde g(t)'
	&\geq 2^{-\frac{pq}{q+1}} (q+1) C_q^{\frac{1}{q+1}} C_p^{\frac{q}{q+1}}
	e^{ ( \frac{1}{p+1} - \frac{q}{q+1} ) \omega t}
	\widetilde g(t)^{\frac{p+1}{q+1}q}\\
	&= 2^{-\frac{pq}{q+1}} (q+1) C_q^{\frac{1}{q+1}} C_p^{\frac{q}{q+1}}
	e^{ - \frac{pq-1}{(p+1)(q+1)} \omega t}
	\widetilde g(t)^{\frac{p+1}{q+1}q}
	\end{align*}
and therefore
	\[
	\widetilde g(t)
	\geq
	\bigg(
	\widetilde g_0^{-\frac{pq-1}{q+1}}
	- 2^{-\frac{pq}{q+1}} (p+1)(q+1) \omega^{-1}
	C_q^{\frac{1}{q+1}} C_p^{\frac{q}{q+1}}
	(1 - e^{-\frac{pq-1}{(p+1)(q+1)} \omega t})
	\bigg)^{-\frac{q+1}{pq-1}},
	\]
where $\tilde{g}_0 = \tilde{g}(0)$.
We remark that the right hand side of this estimate
blows-up at a certain time because
we assume $f_0$ is sufficiently large.
Since
	\[
	\widetilde g_0
	\geq C_q^{\frac{1}{p+1}} C_p^{-\frac{1}{p+1}} f_0^{\frac{q+1}{p+1}},
	\]
the lifespan is estimated by
	\begin{align*}
	T
	&\leq
	- \frac{(p+1)(q+1)}{\omega (pq-1)} \log \bigg(
	1 -
	\frac{2^{\frac{pq}{q+1}}}{(q+1)(p+1)} \omega
	C_q^{-\frac{1}{q+1}} C_p^{-\frac{q}{q+1}}
	\widetilde g_0^{-\frac{pq-1}{q+1}}
	\bigg)\\
	&\leq
	- \frac{(q+1)(p+1)}{\omega (pq-1)} \log \bigg(
	1 -
	\frac{2^{\frac{pq}{q+1}}}{(q+1)(p+1)} \omega
	C_q^{-\frac{p}{p+1}} C_p^{-\frac{1}{p+1}}
	f_0^{-\frac{pq-1}{p+1}}
	\bigg).
	\end{align*}
\end{proof}

\section{Blow-up of a Weakly Coupled Systems of
Complex Ginzburg-Landau Equations on the Torus}
\label{section:3}

In this section,
we show Theorem \ref{Theorem:1.1}.
Namely, we consider the Cauchy problem
for weakly coupled systems of complex Ginzburg-Landau equations on the torus:
	\begin{align}
	\begin{cases}
	\partial_t u + \alpha_1 \Delta u = \beta_1 |v|^p,
	& t \in \lbrack 0, T), \quad x \in \mathbb T^n,\\
	\partial_t v + \alpha_2 \Delta v = \beta_2 |u|^q,
	& t \in \lbrack 0, T), \quad x \in \mathbb T^n,\\
	u(0,x) = u_0(x),\quad
	v(0,x) = v_0(x),
	& x \in \mathbb T^n,
	\end{cases}
	\label{eq:3.1}
	\end{align}
where $p,q > 0$ with $pq \geq 1$
and $\alpha_1, \alpha_2, \beta_1, \beta_2 \in \mathbb C \backslash \{0\}$.
Here, $u_0, v_0$ are non-zero complex-valued $L^1(\mathbb T^n)$-functions
and $u, v$ are complex-valued unknown functions of $(t,x)$.
We remark that
if $\alpha_1, \alpha_2 \in - i \mathbb R$,
\eqref{eq:3.1} is the weakly coupled systems of Schr\"odinger equations
and
if $\alpha_1, \alpha_2 < 0$,
\eqref{eq:3.1} is the weakly coupled systems of heat equations.
Then Proposition \ref{Proposition:2.3} gives
an apriori upper bound of lifespan and lower bounds of blow-up solution.
The precise statement of Theorem \ref{Theorem:1.1} is the following:

\begin{Proposition}
\label{Proposition:3.1}
Let $u_0, v_0 \in L^1(\mathbb T^n)$ satisfy
	\begin{align*}
	&\frac{|\beta_2|^{2+p}}{q+1}
	(2 \pi)^{-n(p-1)}
	\bigg\{ \mathrm{Re}
	\bigg( \overline \beta_1 \int_{\mathbb T^n} u_0(x) dx \bigg) \bigg\}^{q+1}\\
	&\geq \frac{|\beta_1|^{2+q}}{p+1}
	(2 \pi)^{-n(q-1)}
	\bigg\{ \mathrm{Re}
	\bigg( \overline \beta_2 \int_{\mathbb T^n} v_0(x) dx \bigg) \bigg\}^{p+1}
	>0.
	\end{align*}
If there is a solution $(u,v) \in C^1((0,T),H^2(\mathbb T^n))$
for \eqref{eq:3.1} with the initial datum $(u_0,v_0)$.
Then $(u,v)$ blows up in a finite time.
Moreover, let $T$ be the lifespan.
Then with some positive constants $C_1, C_2, C_3$,
the estimates
	\begin{align*}
	&T \leq T_3
	:= C_1 \bigg\{ \mathrm{Re} \bigg( \overline{\beta_1} \int_{\mathbb T^n} u_0(x)d x \bigg)
	\bigg\}^{\frac{pq-1}{p+1}},\\
	&\mathrm{Re} \bigg( \overline{\beta_1} \int_{\mathbb T^n} u(t,x)d x \bigg)
	\geq C_2 ( T_3 - t)^{-\frac{p+1}{pq-1}},
	\qquad t \in \lbrack 0, T)\\
	&\mathrm{Re} \bigg( \overline{\beta_2} \int_{\mathbb T^n} v(t,x)d x \bigg)
	\geq C_3 ( T_3 - t)^{-\frac{q+1}{pq-1}},
	\qquad t \in \lbrack 0, T).
	\end{align*}
\end{Proposition}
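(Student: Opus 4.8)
The plan is to integrate \eqref{eq:3.1} over $\mathbb{T}^n$, where the Laplacian term disappears, and thereby reduce the problem to the ODE system treated in Proposition \ref{Proposition:2.3}. Using $\int_{\mathbb{T}^n}\Delta u\,dx=\int_{\mathbb{T}^n}\Delta v\,dx=0$ together with the fact that, under the stated regularity, all integrals below are finite and differentiation under the integral sign is justified, I would set
\[
f(t):=\mathrm{Re}\Big(\overline{\beta_1}\int_{\mathbb{T}^n}u(t,x)\,dx\Big),\qquad
g(t):=\mathrm{Re}\Big(\overline{\beta_2}\int_{\mathbb{T}^n}v(t,x)\,dx\Big),
\]
so that $f,g\in C^1$ with
\[
f'(t)=|\beta_1|^{2}\int_{\mathbb{T}^n}|v(t,x)|^{p}\,dx\ge0,\qquad
g'(t)=|\beta_2|^{2}\int_{\mathbb{T}^n}|u(t,x)|^{q}\,dx\ge0 .
\]
In particular $f$ and $g$ are non-decreasing, hence $f(t)\ge f(0)>0$ and $g(t)\ge g(0)>0$ by the hypothesis; this positivity is used throughout.

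To close the system into an ordinary differential inequality I would invoke H\"older's inequality on $\mathbb{T}^n$ (whose measure is $(2\pi)^n$): from $g(t)\le|\beta_2|\int_{\mathbb{T}^n}|v|\,dx\le|\beta_2|(2\pi)^{n(p-1)/p}\big(\int_{\mathbb{T}^n}|v|^{p}\,dx\big)^{1/p}$ one obtains $\int_{\mathbb{T}^n}|v|^{p}\,dx\ge(2\pi)^{-n(p-1)}|\beta_2|^{-p}g(t)^{p}$, and symmetrically for $u$, so that
\[
f'(t)\ge(p+1)C_p\,g(t)^{p},\qquad g'(t)\ge(q+1)C_q\,f(t)^{q},
\]
where $(p+1)C_p:=|\beta_1|^{2}|\beta_2|^{-p}(2\pi)^{-n(p-1)}$ and $(q+1)C_q:=|\beta_2|^{2}|\beta_1|^{-q}(2\pi)^{-n(q-1)}$. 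Multiplying the hypothesis on $(u_0,v_0)$ through by the positive factor $|\beta_1|^{q}|\beta_2|^{p}$ shows that it is equivalent to the condition $C_q f(0)^{q+1}\ge C_p g(0)^{p+1}>0$ under which Proposition \ref{Proposition:2.3} applies.

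Next I would run a comparison argument. For small $\varepsilon>0$ let $(\tilde f_\varepsilon,\tilde g_\varepsilon)$ solve the ODE system of Proposition \ref{Proposition:2.3} with the above $C_p,C_q$ and initial data $(f(0)-\varepsilon,\,g(0)-c\varepsilon)$, where $c>0$ is chosen once and for all so that $C_q(f(0)-\varepsilon)^{q+1}\ge C_p(g(0)-c\varepsilon)^{p+1}$ still holds. With $\mathcal{G}(s)=(p+1)C_p(s_+)^{p}$, $\mathcal{F}(s)=(q+1)C_q(s_+)^{q}$ and $C=0$, Lemma \ref{Lemma:2.2} applies: $(f,g)$ is a super-solution and $(\tilde f_\varepsilon,\tilde g_\varepsilon)$ a sub-solution with strictly ordered data, so $f\ge\tilde f_\varepsilon$ and $g\ge\tilde g_\varepsilon$ on their common interval of existence. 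Since $\tilde g_\varepsilon$ blows up no later than the right-hand side of \eqref{eq:2.3} with $f_0$ replaced by $f(0)-\varepsilon$, the lifespan $T$ of $(u,v)$ cannot exceed that value, and letting $\varepsilon\downarrow0$ gives $T\le T_3$. For the rate of $g$, estimate \eqref{eq:2.2} for $\tilde g_\varepsilon$, passed to the limit $\varepsilon\downarrow0$, has the form $\{A_0-B_0t\}^{-(q+1)/(pq-1)}-D_0$ with $A_0/B_0=T_3$; since the first term blows up as $t\uparrow T_3$ while $g\ge g(0)>0$ everywhere, one absorbs $D_0$ and the behaviour for small $t$ into a single constant to conclude $g(t)\ge C_3(T_3-t)^{-(q+1)/(pq-1)}$. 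For $f$, the conservation identity \eqref{eq:2.1} gives $\tilde f_\varepsilon(t)^{q+1}\ge C_q^{-1}C_p\big(\tilde g_\varepsilon(t)^{p+1}-\tilde g_\varepsilon(0)^{p+1}\big)$, hence $\tilde f_\varepsilon(t)\gtrsim\tilde g_\varepsilon(t)^{(p+1)/(q+1)}$ once $\tilde g_\varepsilon$ is large; combined with the estimate for $g$ and the identity $\tfrac{p+1}{q+1}\cdot\tfrac{q+1}{pq-1}=\tfrac{p+1}{pq-1}$ this yields $f(t)\ge C_2(T_3-t)^{-(p+1)/(pq-1)}$ after again fixing the constant near $t=0$.

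The torus case is the favourable one, and I do not expect a genuine analytic obstacle here: integrating over $\mathbb{T}^n$ removes precisely the Laplacian term whose non-absorbability is the difficulty emphasised in the abstract (and which must be handled via a compactly supported test function on $\mathbb{R}^n$ in the proof of Theorem \ref{Theorem:1.2}), so the substantive work is already done in Proposition \ref{Proposition:2.3} and Lemma \ref{Lemma:2.2}. The points that require care are rather bookkeeping: (i) the $\varepsilon$-perturbation of the ODE initial data, needed because Lemma \ref{Lemma:2.2} demands strictly ordered data whereas the hypothesis is a non-strict inequality, together with the passage $\varepsilon\downarrow0$; and (ii) converting the shifted-power bound \eqref{eq:2.2} and the identity \eqref{eq:2.1} into the clean rates with explicit constants $C_1,C_2,C_3$ depending only on $\beta_1,\beta_2,n,p,q$, where the positivity $f\ge f(0)>0$, $g\ge g(0)>0$ secured at the start is what makes the constants uniform down to $t=0$.
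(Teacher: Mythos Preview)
Your proposal is correct and follows essentially the same approach as the paper: define the weighted averages $U(t),V(t)$, use $\int_{\mathbb{T}^n}\Delta u\,dx=0$ and H\"older to obtain the same pair of differential inequalities with the same constants $C_p,C_q$, and then invoke Proposition~\ref{Proposition:2.3}. The paper's proof is terser---it simply says ``by Proposition~\ref{Proposition:2.3} with $C_p=\ldots$, $C_q=\ldots$''---whereas you spell out the $\varepsilon$-perturbation needed to feed the super-solution pair into Lemma~\ref{Lemma:2.2} with strictly ordered data, and you explain how the conservation identity \eqref{eq:2.1} converts the bound on $g$ into the bound on $f$; these are exactly the details the paper suppresses.
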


\begin{proof}
Let
	\[
	U(t)
	= \mathrm{Re} \bigg( \overline \beta_1 \int_{\mathbb T^n} u(t,x) dx \bigg),
	\quad
	V(t)
	= \mathrm{Re} \bigg( \overline \beta_2 \int_{\mathbb T^n} v(t,x) dx \bigg).
	\]
Multiplying $\overline \beta_1$
by the both sides of the first equation of \eqref{eq:3.1},
integrating over $\mathbb T^n$,
taking real part,
and by the H\"older inequality,
	\[
	\frac{d}{dt} U(t)
	= |\beta_1|^{2} |\beta_2|^{-p} \int_{\mathbb T^n} |\overline \beta_2 v(t,x)|^p dx
	\geq |\beta_1|^{2} |\beta_2|^{-p} (2 \pi)^{-n(p-1)} V(t)^p.
	\]
Here we have used the condition that $p \geq 1$ and the fact that
	\[
	\int_{\mathbb T^n} \Delta f(x) dx = 0
	\]
for $f \in H^2(\mathbb T^n)$.
Similarly,
	\[
	\frac{d}{dt} V(T)
	\geq |\beta_2|^{2} |\beta_1|^{-q} (2 \pi)^{-n(q-1)} U(t)^q.
	\]
Then by Proposition \ref{Proposition:2.3}
with
	\[
	C_p = \frac{|\beta_1|^{2} |\beta_2|^{-p} (2 \pi)^{-n(p-1)}}{p+1},\qquad
	C_q = \frac{|\beta_2|^{2} |\beta_1|^{-q} (2 \pi)^{-n(q-1)}}{q+1},
	\]
we obtain Proposition \ref{Proposition:3.1}.
\end{proof}

\section{Blow-up of Weakly Coupled Systems of
Complex Ginzburg-Landau Equations on the Euclidean Space}
\label{section:4}

In this section,
we show Theorem \ref{Theorem:1.2}.
To state the precise statement of Theorem \ref{Theorem:1.2},
we introduce some notation.
Let $B(r) \subset \mathbb R^n$ be the open ball centered at the origin with radius $r$.
Let $\phi \in C^2(B(1))$ be a non-negative function satisfying
	\[
	\begin{cases}
	- \Delta \phi \leq \lambda \phi, &\mbox{in} \quad B(1),\\
	\phi =0, &\mbox{on} \quad \partial B(1),\\
	\nabla \phi =0, &\mbox{on} \quad \partial B(1)
	\end{cases}
	\]
for some $\lambda > 0$.
We remark that such $\phi$ exists.
For example, let $\psi$ be an eigenfunction of $-\Delta$
on $B(1)$ with the Dirichlet condition
with a positive eigenvalue $\lambda$.
Then $\psi^2$ also satisfies the Neumann condition and
	\[
	-\Delta (\psi^2)
	= 2 \psi (-\Delta \psi) - 2 |\nabla \psi|^2
	\leq 2 \lambda \psi^2.
	\]
Let $p,q > 0$, $pq > 1$ and $U_0, V_0 > 0$.
Put
	\begin{align*}
	R_0
	&= R_0(p,q,\phi,\lambda,\beta_1,\beta_2,U_0,V_0)
	= \max(R_1,R_2),\\
	R_1
	&=R_1(p,q,\phi,\lambda,\beta_1,\beta_2,U_0)\\
	&= 2^{\frac{pq}{q+1} \frac{1}{2-n \frac{pq-1}{p+1}}}
	\bigg(\frac{p+1}{q+1}\bigg)^{\frac{1}{pq-1} \frac{1}{2 \frac{p+1}{pq-1} -n}}
	( \max(|\alpha_1|,|\alpha_2|)\lambda)^{\frac{1}{2 -n\frac{pq-1}{p+1}}}\\
	&\cdot |\beta_1|^{\frac{pq-2}{2 (p+1)-n(pq-1)}}
	|\beta_2|^{-\frac{p}{2 (p+1)-n(pq-1)}}
	U_0^{-\frac{1}{2 \frac{p+1}{pq-1}-n}},\\
	R_2
	&=
	R_2(p,q,\phi,\beta_1,\beta_2,U_0,V_0)\\
	&=
	\bigg(\frac{q+1}{p+1}\bigg)^{\frac{1}{n(p-q)}}
	|\beta_1|^{\frac{q+2}{n(p-q)}}
	|\beta_2|^{-\frac{2+p}{n(p-q)}}
	\| \phi \|_{L^1(B(1))}^{-\frac{1}{n}}
	V_0^{\frac{p+1}{n(p-q)}}
	U_0^{-\frac{q+1}{n(p-q)}}.
	\end{align*}
Moreover, we define positive constants $C_1,C_2,C_3$ as follows:
	\begin{align}
	C_1
	&=C_1(p,q,\phi,\beta_1,\beta_2)
	\label{eq:4.1}\\
	&= \bigg( \frac{q+1}{p+1} \bigg)^{\frac{pq-1}{(p+1)(q+1)}}
	\bigg(
	\frac{|\beta_1|^{2+q}}%
	{|\beta_2|^{2+p}}
	\bigg)^{\frac{pq-1}{(p+1)(q+1)}}
	\|\phi\|_{L^1(B(1))}^{-\frac{(pq-1)(p-q)}{(p+1)(q+1)}},
	\nonumber\\
	C_2
	&= C_2(p,q,\phi,\beta_1,\beta_2)
	\label{eq:4.2}\\
	&=
	2^{-\frac{pq}{q+1}} \bigg( \frac{q+1}{p+1} \bigg)^{\frac{q}{q+1}}
	|\beta_1|^{\frac{q}{q+1}}
	|\beta_2|^{\frac{2-pq}{q+1}}
	\| \phi \|_{L^1(B(1))}^{-\frac{pq-1}{q+1}},
	\nonumber\\
	C_3
	&=C_3(p,q,\beta_1,\beta_2)
	\nonumber\\
	&= 2^{\frac{pq}{q+1} \frac{1}{1-\frac{n}{2} \frac{pq-1}{p+1}}}
	\frac{q+1}{pq-1}
	\bigg( \frac{p+1}{q+1}
	\bigg)^{\frac{1}{p+1} \frac{1}{1- \frac{n}{2} \frac{pq-1}{p+1}}}
	\nonumber\\
	&\cdot
	\max(|\alpha_1|,|\alpha_2|)^{\frac{n}{2} \frac{1}{\frac{p+1}{pq-1}-\frac{n}{2}}}
	|\beta_1|^{-\frac{2(2-pq)}{2 (p+1)-n(pq-1)}}
	|\beta_2|^{-\frac{2p}{2 (p+1)-n(pq-1)}}.
	\nonumber
	\end{align}

Now we are in position to state our main statement more precisely.
\begin{Proposition}
\label{Proposition:4.1}
Let $\alpha_1, \alpha_2 <0$, $\beta_1, \beta_2 \in \mathbb C \backslash \{0\}$.
Let $p,q \geq 1$ satisfy $p \geq q$ and $\frac{p+1}{pq-1} > \frac{n}{2}$.
Let $\phi$ be the function defined above.
Let $U_0, V_0$ be positive numbers.
Let $u_0$ and $v_0$ be $L_{\mathrm{loc}}^1$-functions
satisfying
	\[
	U_0
	\leq \mathrm{Re} \bigg( \overline \beta \int_{B(R)} u_0(x) \phi\bigg(\frac{x}{R}\bigg) dx \bigg),\quad
	V_0
	\leq \mathrm{Re} \bigg( \overline \beta \int_{B(R)} v_0(x) \phi\bigg(\frac{x}{R}\bigg) dx \bigg)
	\]
for $R > R_0$ with $R_0$ above and
	\[
	\overline \beta_1 u_0(x),\quad
	\overline \beta_2 v_0(x) > 0,
	\qquad \mbox{a.e.} \quad x \in \mathbb R^n
	\]
If there exists a solution $(u,v) \in (C^2((0,T) \times \mathbb R^n))^2$
for \eqref{eq:1.1},
then
	\begin{align*}
	&\int_{B(R)} v(t,x) \phi\bigg(\frac{x}{R}\bigg) dx
	+ \bigg( \frac{p+1}{q+1} \|\phi\|_{L^1(B(1))}^{p-q} R^{n(p-q)} U_0^{q+1}
	- V_0^{p+1} \bigg)^{\frac{1}{p+1}}\\
	&\geq \Big\{
	C_1
	R^{-\frac{n(pq-1)(p-q)}{(p+1)(q+1)}}
	U_0^{-\frac{pq-1}{p+1}}
	- C_2 \lambda^{-1} R^{2-n\frac{pq-1}{q+1}} (1 - e^{- \frac{pq-1}{q+1} \lambda R^{-2}t} )
	\Big\}^{- \frac{q+1}{pq-1}},
	\end{align*}
for $R > R_0$ and $0 \leq t < T$.
Moreover, the lifespan $T$ is estimated by
	\begin{align*}
	T
	&\leq C_3
	\min \bigg\{ - \widetilde R^2 \log ( 1 - \widetilde R^{-2+n\frac{pq-1}{p+1}})
	\ ;\ \widetilde R \geq \frac{R_0}{R_1} \bigg\}\\
	&\cdot \lambda^{\frac{n}{2} \frac{1}{\frac{p+1}{pq-1}-\frac{n}{2}}}
	\| \phi \|^{\frac{1}{\frac{p+1}{pq-1} - \frac{n}{2}}}
	U_0^{-\frac{1}{\frac{p+1}{pq-1} -\frac{n}{2}}},
	\end{align*}
\end{Proposition}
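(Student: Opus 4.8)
The plan is to reduce the PDE system on $\mathbb{R}^n$ to the modified ODE system of Corollary \ref{Corollary:2.4} by testing the equations against the scaled cut-off $\phi(\cdot/R)$, and then to optimize over the scaling parameter $R$. First I would set
\[
U_R(t) = \mathrm{Re}\Big(\overline{\beta_1}\int_{B(R)} u(t,x)\phi\big(\tfrac{x}{R}\big)\,dx\Big),
\qquad
V_R(t) = \mathrm{Re}\Big(\overline{\beta_2}\int_{B(R)} v(t,x)\phi\big(\tfrac{x}{R}\big)\,dx\Big),
\]
multiply the first equation of \eqref{eq:1.1} by $\overline{\beta_1}\phi(x/R)$, the second by $\overline{\beta_2}\phi(x/R)$, integrate over $B(R)$ and take real parts. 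Since $\alpha_1,\alpha_2<0$ and $\overline{\beta_1}u_0,\overline{\beta_2}v_0>0$ so that $u,v$ have the right sign to make the integrands nonnegative, integration by parts (using $\phi=\nabla\phi=0$ on $\partial B(1)$, hence no boundary terms) together with $-\Delta\phi\le\lambda\phi$ gives
\[
\frac{d}{dt}U_R(t) + |\alpha_1|\lambda R^{-2} U_R(t)
\ge |\beta_1|^2|\beta_2|^{-p}\int_{B(R)} |\overline{\beta_2}v(t,x)|^p \phi\big(\tfrac{x}{R}\big)\,dx,
\]
and then Hölder's inequality with the weight $\phi(\cdot/R)$, i.e. $V_R(t)^p \le \big(\int \phi(\cdot/R)\big)^{p-1}\int |\overline{\beta_2}v|^p\phi(\cdot/R) = \big(\|\phi\|_{L^1(B(1))}R^n\big)^{p-1}\int|\overline{\beta_2}v|^p\phi(\cdot/R)$, yields the weakly-coupled ODI pair
\[
\frac{d}{dt}U_R + \omega_R\,\tfrac{U_R}{q+1} \ge (p+1)C_p\,V_R^{\,p},
\qquad
\frac{d}{dt}V_R + \omega_R\,\tfrac{V_R}{p+1} \ge (q+1)C_q\,U_R^{\,q},
\]
with $\omega_R$ proportional to $\max(|\alpha_1|,|\alpha_2|)\lambda R^{-2}$ and $C_p,C_q$ as in the proof of Proposition \ref{Proposition:3.1} but with an extra factor $\|\phi\|_{L^1(B(1))}^{-(p-1)}R^{-n(p-1)}$, respectively $\|\phi\|_{L^1(B(1))}^{-(q-1)}R^{-n(q-1)}$.

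Next I would invoke Lemma \ref{Lemma:2.2} (Kamke's comparison principle) to compare $(U_R,V_R)$ from below with the solution $(f,g)$ of the corresponding modified ODE \emph{equality} system of Corollary \ref{Corollary:2.4}, started at any data strictly below $(U_R(0),V_R(0))$ — which can be taken arbitrarily close to $(U_0,V_0)$ using $U_R(0)\ge U_0$, $V_R(0)\ge V_0$. One must check the hypothesis \eqref{eq:2.5} of Corollary \ref{Corollary:2.4}: the first branch of the max, $\propto \omega_R^{\frac{p+1}{pq-1}}C_p^{-1/(pq-1)}C_q^{-p/(pq-1)}$, is a power of $R$ with exponent $2-n\frac{pq-1}{p+1}>0$ by the subcriticality assumption $\frac{p+1}{pq-1}>\frac n2$, and its coefficient forces $R>R_1$; the second branch, $\propto C_q^{-1/(q+1)}C_p^{1/(q+1)}V_0^{(p+1)/(q+1)}$, is a power of $R$ with exponent $-\frac{n(p-q)}{q+1}\le 0$, and forces $R>R_2$. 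Hence \eqref{eq:2.5} holds precisely for $R>R_0=\max(R_1,R_2)$, and Corollary \ref{Corollary:2.4} produces the stated lower bound for $e^{\frac{\omega_R}{p+1}t}g(t)\le e^{\frac{\omega_R}{p+1}t}V_R(t)$ — which after discarding the exponential on the left (it is $\ge1$) and substituting the explicit forms of $\omega_R,C_p,C_q$ gives the first displayed inequality of the Proposition. The symmetric estimate for $U_R$ follows by the same device applied to $f$.

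Finally, the lifespan bound: Corollary \ref{Corollary:2.4} already gives, for each admissible $R$,
\[
T \le -\frac{(q+1)(p+1)}{\omega_R(pq-1)}\log\Big(1-\tfrac{2^{pq/(q+1)}}{(q+1)(p+1)}\,\omega_R\,C_q^{-p/(p+1)}C_p^{-1/(p+1)}U_0^{-\frac{pq-1}{p+1}}\Big),
\]
and substituting $\omega_R = \max(|\alpha_1|,|\alpha_2|)\lambda R^{-2}$ and the $R$-dependence of $C_p,C_q$ shows the prefactor $\omega_R^{-1}$ scales like $R^2$ while the argument of the log is $1 - (\text{const})\,R^{-2+n\frac{pq-1}{p+1}}$; writing $R = R_1\widetilde R$ makes the constant inside the log equal to $1$, so the bound becomes $C_3\,\lambda^{\frac n2\frac1{\frac{p+1}{pq-1}-\frac n2}}\|\phi\|^{\frac1{\frac{p+1}{pq-1}-\frac n2}}U_0^{-\frac1{\frac{p+1}{pq-1}-\frac n2}}$ times $-\widetilde R^2\log(1-\widetilde R^{-2+n\frac{pq-1}{p+1}})$, and minimizing over $\widetilde R\ge R_0/R_1$ gives \eqref{eq:1.21}. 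The main obstacle I expect is bookkeeping rather than conceptual: one must (i) be careful that the sign conditions on $\overline{\beta_1}u_0,\overline{\beta_2}v_0$ and $\alpha_1,\alpha_2<0$ genuinely propagate so that all the integrands $\int|\overline{\beta_2}v|^p\phi(\cdot/R)$ etc.\ are nonnegative and the integration-by-parts term has the right sign, and (ii) track the exact powers of $R$, $\lambda$, $\beta_1$, $\beta_2$ and $\|\phi\|_{L^1(B(1))}$ through Hölder and through Corollary \ref{Corollary:2.4} so that they match $R_0,C_1,C_2,C_3$ as defined — the algebra of exponents (repeatedly using identities like $\frac{p+1}{q+1}q-1=\frac{pq-1}{q+1}$) is where errors are most likely to creep in.
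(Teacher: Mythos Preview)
Your approach is essentially identical to the paper's: define the localized averages $U_R,V_R$, derive the damped ODI pair via integration by parts and H\"older, apply Corollary~\ref{Corollary:2.4} with $\omega=(p+1)\max(|\alpha_1|,|\alpha_2|)\lambda R^{-2}$ and the stated $C_p,C_q$, verify \eqref{eq:2.5} is equivalent to $R>R_0$, and finally minimize the lifespan bound over $R$ via the substitution $\widetilde R=R/R_1$. One bookkeeping slip to fix: the $R$-exponent of the first branch of \eqref{eq:2.5} is $n-2\tfrac{p+1}{pq-1}<0$, not $2-n\tfrac{pq-1}{p+1}>0$ as you wrote --- the negative sign is precisely what makes the condition $U_0>\mu_1 R^{n-2(p+1)/(pq-1)}$ hold for \emph{large} $R$, i.e.\ $R>R_1$; with a positive exponent your conclusion would be reversed. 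Also, your remark ``discarding the exponential on the left (it is $\ge1$)'' goes the wrong way: from $e^{\omega t/(p+1)}V_R(t)+\text{const}\ge\text{RHS}$ you cannot drop a factor $\ge1$ on the left of a lower bound, so the displayed estimate in the Proposition should really retain that exponential weight (the paper's proof does not address this point either).
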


\begin{proof}
Since $(u,v)$ is a solution,
by letting
	\[
	\begin{cases}
	U(t)
	= \mathrm{Re} \bigg( \overline \beta_1
	\int_{\mathbb R^n} u(t,x) \phi (\frac{x}{R}) dx \bigg),\\
	V(t)
	= \mathrm{Re} \bigg( \overline \beta_2
	\int_{\mathbb R^n} v(t,x) \phi (\frac{x}{R}) dx \bigg),
	\end{cases}
	\]
the following ODI holds as long as $U(t), V(t) \geq 0$:
	\[
	\begin{cases}
	U'(t) + \max( |\alpha_1|,|\alpha_2|)
	\frac{p+1}{q+1}\lambda R^{-2} U(t)\\
	\geq U'(t) + |\alpha_1| \lambda R^{-2} U(t)
	\geq R^{-n(p-1)} \| \phi \|_{L^1(\mathbb R^n)}^{-p+1}
	|\beta_1|^{2}|\beta_2|^{-p} V(t)^p,\\
	V'(t) + \max( |\alpha_1|,|\alpha_2|) \frac{p+1}{p+1}\lambda R^{-2} V(t)\\
	\geq V'(t) + |\alpha_2| \lambda R^{-2} V(t)
	\geq R^{-n(q-1)} \| \phi \|_{L^1(\mathbb R^n)}^{-q+1} |\beta_2|^{2}|\beta_1|^{-q} U(t)^q,
	\end{cases}
	\]
where $\alpha_1$ and $\alpha_2 \leq 0$.
Let $\widetilde \lambda = \max(|\alpha_1|,|\alpha_2|) \lambda$.
By applying Corollary \ref{Corollary:2.4} with
$\omega = (p+1) \widetilde \lambda R^{-2}$ and
	\[
	C_p = \frac{\| \phi \|_{L^1(B(1))}^{-p+1}}{p+1}
	|\beta_1|^{2} |\beta_2|^{-p} R^{-n(p-1)},\quad
	C_q = \frac{\| \phi \|_{L^1(B(1))}^{-q+1}}{q+1}
	|\beta_2|^{2} |\beta_1|^{-q} R^{-n(q-1)},
	\]
$U$ and $V$ blow up if \eqref{eq:2.5} is satisfied.
\eqref{eq:2.5} is rewritten by
	\[
	U_0
	>
	\max\bigg(
	\mu_1 R^{-2 \frac{p+1}{pq-1}+n}
	,\ 
	\mu_2 R^{-n\frac{p-q}{q+1}}
	V_0^{\frac{p+1}{q+1}}
	\bigg),
	\]
where
	\begin{align*}
	\mu_1 &= \mu_1(\beta_1,\beta_2,p,q,\widetilde \lambda,\phi)\\
	&=
	2^{\frac{p+1}{q+1} \frac{pq}{pq-1}}
	\bigg(\frac{p+1}{q+1}\bigg)^{\frac{1}{pq-1}}
	\widetilde \lambda^{ \frac{p+1}{pq-1}}
	|\beta_1|^{1-\frac{1}{pq-1}}
	|\beta_2|^{-\frac{p}{pq-1}}
	\| \phi \|_{L^1(B(1))},\\
	\mu_2
	&=\mu_2(\beta_1,\beta_2,p,q,\phi)\\
	&=
	\bigg(\frac{q+1}{p+1}\bigg)^{\frac{1}{q+1}}
	|\beta_1|^{1+\frac{1}{q+1}}
	|\beta_2|^{-\frac{2+p}{q+1}}
	\| \phi \|_{L^1(B(1))}^{-\frac{p-q}{q+1}}.
	\end{align*}
Here
	\[
	U_0 > \mu_1 R^{-2 \frac{p+1}{pq-1}+n}
	\]
for $R > R_1$,
since $-2 \frac{p+1}{pq-1}+n < 0$
and
	\begin{align*}
	&\mu_1(p,q,\widetilde \lambda,\phi)^{\frac{1}{2 \frac{p+1}{pq-1}-n}}
	U_0^{-\frac{1}{2 \frac{p+1}{pq-1}-n}}\\
	&= 2^{\frac{pq}{q+1} \frac{1}{2-n \frac{pq-1}{p+1}}}
	\bigg(\frac{p+1}{q+1}\bigg)^{\frac{1}{pq-1} \frac{1}{2 \frac{p+1}{pq-1} -n}}
	\widetilde \lambda^{\frac{1}{2 -n\frac{pq-1}{p+1}}}\\
	&\cdot |\beta_1|^{\frac{pq-2}{2 (p+1)-n(pq-1)}}
	|\beta_2|^{-\frac{p}{2 (p+1)-n(pq-1)}}
	U_0^{-\frac{1}{2 \frac{p+1}{pq-1}-n}} = R_1.
	\end{align*}
Similarly,
	\[
	U_0
	>
	\mu_2 R^{-n\frac{p-q}{q+1}}
	V_0^{\frac{p+1}{q+1}}
	\]
for $R > R_2$,
since
	\begin{align*}
	&\mu_2(p,q,\phi)^{\frac{q+1}{n(p-q)}}
	V_0^{\frac{p+1}{n(p-q)}}
	U_0^{-\frac{q+1}{n(p-q)}}\\
	&=
	\bigg(\frac{q+1}{p+1}\bigg)^{\frac{1}{n(p-q)}}
	|\beta_1|^{\frac{q+2}{n(p-q)}}
	|\beta_2|^{-\frac{2+p}{n(p-q)}}
	\| \phi \|_{L^1(B(1))}^{-\frac{1}{n}}
	V_0^{\frac{p+1}{n(p-q)}}
	U_0^{-\frac{q+1}{n(p-q)}} = R_2.
	\end{align*}
\eqref{eq:4.1} and \eqref{eq:4.2} are calculated as
	\begin{align*}
	&C_q^{-\frac{pq-1}{(p+1)(q+1)}} C_p^{\frac{pq-1}{(p+1)(q+1)}}
	U_0^{-\frac{pq-1}{p+1}},\\
	&= \bigg( \frac{q+1}{p+1} \bigg)^{\frac{pq-1}{(p+1)(q+1)}}
	\|\phi\|_{L^1(B(1))}^{-\frac{(pq-1)(p-q)}{(p+1)(q+1)}}
	\bigg(
	\frac{|\beta_1|^{2+q}}%
	{|\beta_2|^{2+p}}
	\bigg)^{\frac{pq-1}{(p+1)(q+1)}}
	R^{-\frac{n(pq-1)(p-q)}{(p+1)(q+1)}}
	U_0^{-\frac{pq-1}{p+1}},\\
	&=
	C_1
	R^{-\frac{n(pq-1)(p-q)}{(p+1)(q+1)}}
	U_0^{-\frac{pq-1}{p+1}},\\
	&2^{-\frac{pq}{q+1}} (q+1)(p+1) \omega^{-1}
	C_q^{\frac{1}{q+1}} C_p^{\frac{q}{q+1}}\\
	&=
	2^{-\frac{pq}{q+1}} \bigg( \frac{q+1}{p+1} \bigg)^{\frac{q}{q+1}}
	|\beta_1|^{\frac{q}{q+1}}
	|\beta_2|^{\frac{2-pq}{q+1}}
	\| \phi \|_{L^1(B(1))}^{-\frac{pq-1}{q+1}}
	\widetilde \lambda^{-1}
	R^{2-n\frac{pq-1}{q+1}},\\
	&=
	C_2 \widetilde \lambda^{-1}
	R^{2-n\frac{pq-1}{q+1}}.
	\end{align*}
Moreover,
	\begin{align*}
	T
	&\leq
	- \min \bigg\{ \frac{(q+1)(p+1)}{\omega (pq-1)} \log \bigg(
	1 -
	\frac{2^{\frac{pq}{q+1}}}{(q+1)(p+1)} \omega
	C_q^{-\frac{p}{p+1}} C_p^{-\frac{1}{p+1}}
	U_0^{-\frac{pq-1}{p+1}} \bigg)
	;\ R > R_0
	\bigg\}\\
	&=
	- \min \bigg\{ \frac{(q+1)}{\widetilde \lambda (pq-1)} R^2
	\log \bigg( 1 -
	\bigg( \frac{R}{R_1} \bigg)^{-2 + n \frac{pq-1}{p+1}} \bigg)
	;\ R > R_0
	\bigg\}
	\end{align*}
Let $\widetilde R = R/ R_1$.
Then
	\begin{align*}
	T
	&\leq
	2^{\frac{pq}{q+1} \frac{1}{1-\frac{n}{2} \frac{pq-1}{p+1}}}
	\frac{q+1}{pq-1}
	\bigg( \frac{p+1}{q+1} \bigg)^{\frac{1}{p+1} \frac{1}{1- \frac{n}{2} \frac{pq-1}{p+1}}}
	\widetilde \lambda^{\frac{1}{1-\frac{n}{2}\frac{pq-1}{p+1}}-1}
	\| \phi \|^{\frac{1}{\frac{p+1}{pq-1} - \frac{n}{2}}}
	U_0^{-\frac{1}{\frac{p+1}{pq-1} -\frac{n}{2}}}\\
	&\cdot
	|\beta_1|^{-\frac{2(2-pq)}{2 (p+1)-n(pq-1)}}
	|\beta_2|^{-\frac{2p}{2 (p+1)-n(pq-1)}}
	\min \bigg\{ - \widetilde R^2 \log ( 1 - \widetilde R^{-2+n\frac{pq-1}{p+1}})
	\ ;\ \widetilde R \geq \frac{R_0}{R_1} \bigg\}\\
	&\leq
	C_3
	\lambda^{\frac{n}{2} \frac{1}{\frac{p+1}{pq-1}-\frac{n}{2}}}
	\| \phi \|^{\frac{1}{\frac{p+1}{pq-1} - \frac{n}{2}}}
	U_0^{-\frac{1}{\frac{p+1}{pq-1} -\frac{n}{2}}}\\
	&\cdot \min \bigg\{ - \widetilde R^2 \log ( 1 - \widetilde R^{-2+n\frac{pq-1}{p+1}})
	\ ;\ \widetilde R \geq \frac{R_0}{R_1} \bigg\}.
	\end{align*}
Here, we have used the fact that
$f(x) = -x \log(1-x^{-\theta})$
for $0 < \theta < 1$ attains the minimum on
$\lbrack R_0/R_1,\infty) \subset \lbrack 1,\infty)$.
Indeed,
	\[
	f'(x) = - \log(1-x^{-\theta}) - \theta \frac{x^{-\theta}}{1-x^{-\theta}}.
	= \alpha - \log(1-x^{-\theta}) - \theta \frac{1}{1-x^{-\theta}}.
	\]
by putting $y=-\log(1-x^{-\theta}) \in (0,\infty)$,
$f'(x) = g(y) = \theta + y -\theta e^y$
and therefore $f$ attains the minimum.
\end{proof}



\begin{thebibliography}{10}

\bibitem{bib:1}
D.~Andreucci, M.~A. Herrero, and J.~J.~L. Vel\'azquez, ``Liouville theorems and
  blow up behaviour in semilinear reaction diffusion systems'', { Ann. Inst. H.
  Poincar\'e Anal. Non Lin\'eaire}, {\bf 14}(1997),  1--53.

\bibitem{bibx:2}
M.~Escobedo and M.~A. Herrero,
``A uniqueness result for a semilinear reaction-diffusion system'',
{Proc. Amer. Soc.},
{\bf 112}(1991),
175--185,

\bibitem{bibx:3}
M.~Escobedo and M.~A. Herrero, ``Boundedness and blow up for a semilinear
  reaction-diffusion system'', { J. Differential Equations}, {\bf 89}(1991),
  176--202.

\bibitem{bibx:4}
K.~Deng,
``Blow-up rates for parabolic systems",
Z. Angew. Math. Phys.,
{\bf 47}(1996), 132--143. 

\bibitem{bibx:5}
F.~Dickstein and M.~Loayza,
``Life span of solutions of a weakly coupled parabolic system",
{Z. angew. Math. Phys.}, {\bf 59}(2008), 1--23.

\bibitem{bibx:6}
M.~Fila and P.~Quittner, ``The blow-up rate for a semilinear parabolic
  system'', { J. Math. Anal. Appl.}, {\bf 238}(1999),  468--476.

\bibitem{bibx:7}
M.~Fila and P.~Souplet,
``The blow-up rate for semilinear parabolic problems on general domains",
NoDEA Nonlinear Differential Equations Appl.,
{\bf 8}(2001), 473--480. 

\bibitem{bibx:8}
S.~Filippas, M.~A. Herrero, and J.~J.~L. Vel\'azquez, ``Fast blow-up mechanisms
  for sign-changing solutions of a semilinear parabolic equation with critical
  nonlinearity'', { R. Soc. Lond. Proc. Ser. A Math. Phys. Eng. Sci.}, {\bf
  456}(2000),  2957--2982.

\bibitem{bibx:9}
H.~Fujita, ``On the blowing up of solutions of the {C}auchy problem for
  {$u_{t}=\Delta u+u^{1+\alpha }$}'', { J. Fac. Sci. Univ. Tokyo Sect. I}, {\bf
  13}(1966),  109--124 (1966).

\bibitem{bibx:10}
H.~Fujita.
\newblock ``On some nonexistence and nonuniqueness theorems for nonlinear
  parabolic equations''.
\newblock In {\em Nonlinear {F}unctional {A}nalysis ({P}roc. {S}ympos. {P}ure
  {M}ath., {V}ol. {XVIII}, {P}art 1, {C}hicago, {I}ll., 1968)},  105--113.
  Amer. Math. Soc., Providence, R.I., (1970).

\bibitem{bibx:11}
K.~Fujiwara and T.~Ozawa, ``Finite time blowup of solutions to the nonlinear
  {S}chr\"odinger equation without gauge invariance'', { J. Math. Phys.}, {\bf
  57}(2016),  082103, 8.

\bibitem{bibx:12}
K.~Fujiwara and T.~Ozawa, ``Weighted $l^p$-boundedness of convolution type
  integral operators associated with bilinear estimates in the {S}obolev
  spaces'', { J. Math. Soc. Japan}, (To appear).

\bibitem{bibx:13}
Y.~Giga and R.~V. Kohn, ``Characterizing blowup using similarity variables'', {
  Indiana Univ. Math. J.}, {\bf 36}(1987),  1--40.

\bibitem{bibx:14}
K.~Hayakawa, ``On nonexistence of global solutions of some semilinear parabolic
  differential equations'', { Proc. Japan Acad.}, {\bf 49}(1973),  503--505.

\bibitem{bibx:15}
M.~A. Herrero and J.~J.~L. Vel\'azquez, ``Explosion de solutions d'\'equations
  paraboliques semilin\'eaires supercritiques'', { C. R. Acad. Sci. Paris
  S\'er. I Math.}, {\bf 319}(1994),  141--145.

\bibitem{bibx:16}
M.~A. Herrero and J.~J.~L. Vel\'azquez, ``A blow-up result for semilinear heat
  equations in the supercritical case'', preprint.

\bibitem{bibx:17}
S.-B. Hsu, { Ordinary differential equations with applications}, World
  Scientific Publishing Co. Pte. Ltd., Hackensack, NJ, (2013).

\bibitem{bibx:18}
M.~Ikeda and T.~Inui, ``Small data blow-up of {$L^2$} or {$H^1$}-solution for
  the semilinear {S}chr\"odinger equation without gauge invariance'', { J.
  Evol. Equ.}, {\bf 15}(2015),  571--581.

\bibitem{bibx:19}
M.~Ikeda and T.~Inui, ``Some non-existence results for the semilinear
  {S}chr\"odinger equation without gauge invariance'', { J. Math. Anal. Appl.},
  {\bf 425}(2015),  758--773.

\bibitem{bibx:20}
M.~Ikeda and Y.~Wakasugi, ``Small-data blow-up of {$L^2$}-solution for the
  nonlinear {S}chr\"odinger equation without gauge invariance'', { Differential
  Integral Equations}, {\bf 26}(2013),  1275--1285.

\bibitem{bibx:21}
E.~Kamke, ``Zur {T}heorie der {S}ysteme gew\"ohnlicher
  {D}ifferentialgleichungen. {II}'', { Acta Math.}, {\bf 58}(1932),  57--85.

\bibitem{bibx:22}
K.~Kobayashi, T.~Sirao, and H.~Tanaka, ``On the growing up problem for
  semilinear heat equations'', { J. Math. Soc. Japan}, {\bf 29}(1977),
  407--424.

\bibitem{bibx:23}
Y.~Kobayashi, ``The life span of blow-up solutions for a weakly coupled system
  of reaction-diffusion equations'', { Tokyo J. Math.}, {\bf 24}(2001),
  487--498.

\bibitem{bibx:24}
H.~Kuiper,
``Life span of nonnegative solutions to certain quasilinear parabolic Cauchy problem'',
{Electron J. Differential Equations}, {\bf 2003}(2003), 1--11.

\bibitem{bibx:25}
T.-Y. Lee and W.-M. Ni, ``Global existence, large time behavior and life span
  of solutions of a semilinear parabolic {C}auchy problem'', { Trans. Amer.
  Math. Soc.}, {\bf 333}(1992),  365--378.

\bibitem{bibx:26}
H.~Matano and F.~Merle, ``On nonexistence of type {II} blowup for a
  supercritical nonlinear heat equation'', { Comm. Pure Appl. Math.}, {\bf
  57}(2004),  1494--1541.

\bibitem{bibx:27}
H.~Matano and F.~Merle, ``Classification of type {I} and type {II} behaviors
  for a supercritical nonlinear heat equation'', { J. Funct. Anal.}, {\bf
  256}(2009),  992--1064.

\bibitem{bibx:28}
E.~Mitidieri and S.~I. Pohozaev, ``Nonexistence of weak solutions for some
  degenerate elliptic and parabolic problems on {$\Bbb R^n$}'', { J. Evol.
  Equ.}, {\bf 1}(2001),  189--220.

\bibitem{bibx:29}
N.~Mizoguchi, ``Type-{II} blowup for a semilinear heat equation'', { Adv.
  Differential Equations}, {\bf 9}(2004),  1279--1316.

\bibitem{bibx:30}
K.~Mochizuki, ``Blow-up, life span and large time behavior of solutions of a
  weakly coupled system of reaction-diffusion equations'', { Adv. Math. Appl.
  Sci.}, {\bf 48, World Scientific}(1998),  175--198.

\bibitem{bibx:31}
T.~Ogawa and Y.~Tsutsumi.
\newblock ``Blow-up of solutions for the nonlinear {S}chr\"odinger equation
  with quartic potential and periodic boundary condition''.
\newblock In {\em Functional-analytic methods for partial differential
  equations ({T}okyo, 1989)}, volume 1450 of {\em Lecture Notes in Math.},
  236--251. Springer, Berlin, (1990).

\bibitem{bibx:32}
T.~Oh, ``A blowup result for the periodic nls without gauge invariance'', { C.
  R. Math. Acad. Sci. Paris}, {\bf 350},  389--392.

\bibitem{bibx:33}
T.~Ozawa and Y.~Yamazaki, ``Life-span of smooth solutions to the complex
  {G}inzburg-{L}andau type equation on a torus'', { Nonlinearity}, {\bf
  16}(2003),  2029--2034.

\bibitem{bibx:34}
Y.~Wakasugi and K.~Nishihara, ``Critical exponent for the cauchy problem to the
  weakly coupled damped wave system'', { RIMS k\^oky\^uroku}, {\bf 1962}(2015),
   59--67.

\bibitem{bibx:35}
Q.~S. Zhang, ``Blow-up results for nonlinear parabolic equations on
  manifolds'', { Duke Math. J.}, {\bf 97}(1999),  515--539.
\end{thebibliography}
\end{document}